\newtheorem{theorem}{Theorem}[section]
\newtheorem{lemma}[theorem]{Lemma}
\theoremstyle{definition}
\newtheorem{definition}[theorem]{Definition}
\theoremstyle{remark}
\newtheorem{remark}[theorem]{Remark}
\newtheorem{axiom}{Axiom}
\title{Entropy-Smooth Structures on Topological Manifolds}
\author{Amandip Sangha}
\address{The Climate and Environmental Research Institute NILU}
\email{asan@nilu.no}
\begin{document}

\begin{abstract}
We present an information--theoretic characterization of smooth structures on topological manifolds. 
Instead of postulating a smooth atlas a priori, we consider families of local probability measures whose 
small--scale entropy response encodes infinitesimal directional information. 
From a concise set of axioms governing this quadratic entropy response, we identify admissible coordinate 
functions, construct entropy coordinate charts, and assemble a smooth atlas whose transition maps are smooth. 
We prove that the resulting entropy--smooth structure is equivalent to the classical smooth structure: 
entropy--smooth functions coincide with smooth functions, entropy coordinate charts are smooth charts, 
and the induced category is equivalent to the category of smooth manifolds. 
The framework is stable under perturbations and compatible with products, submanifolds, immersions, 
submersions, and diffeomorphisms. 
This establishes smooth structure as an information--theoretic invariant under explicitly stated axioms, 
and forms the zeroth--order layer of a broader program connecting entropy, diffusion, and differential geometry.
\\ \\
Keywords: Entropy-based smoothness · Information-theoretic geometry · Smooth manifolds ·
Differential structure reconstruction
\\ \\ 
Mathematics Subject Classification Primary 58A05 · 94A17; Secondary 53C20 ·
28C99 
\end{abstract}

\maketitle
\setcounter{tocdepth}{1}
\tableofcontents

\section{Introduction}

Differential geometry traditionally begins by postulating smooth structure: a maximal atlas of coordinate
charts whose transition maps are smooth. This smooth atlas is imposed \emph{a priori} and subsequently used
to define tangent spaces, derivatives, differential forms, and geometric structures. From a foundational
point of view, it is natural to ask whether smooth structure itself admits an intrinsic characterization,
independent of coordinates and differential calculus.

In this paper we address this question from an information--theoretic perspective. Rather than attempting
to derive smoothness from minimal or purely topological assumptions, we introduce an axiomatic framework
in which smooth structure is characterized by the small--scale behaviour of entropy associated with local
probability probes. The central object is a quadratic entropy response that measures local variation of
functions under infinitesimal perturbations of these probes. Under a precise and explicit set of axioms,
this entropy response identifies admissible coordinate functions, yields entropy coordinate charts, and
determines a smooth atlas.

The purpose of this work is not to claim that arbitrary topological manifolds admit smooth structures
detectable from entropy alone. Instead, we show that whenever a topological manifold is equipped with
families of local information probes satisfying natural locality, stability, and non--degeneracy axioms,
the resulting entropy response encodes exactly the same information as a classical smooth structure.
In this sense, entropy--smoothness provides an information--theoretic \emph{characterization} of smooth
manifolds. The role of the axioms is to make explicit the minimal information--theoretic content required
to recover smoothness, rather than to conceal differentiability assumptions in analytic or geometric form.

Conceptually, the entropy--smooth framework is complementary to abstract approaches to smooth structure
such as differential spaces, smooth function algebras, and related categorical formulations. Those
frameworks begin by postulating an algebra of smooth functions closed under suitable operations. Here,
by contrast, the class of smooth functions is selected by the behaviour of entropy at small scales,
using only probabilistic and information--theoretic primitives. Closure under smooth functional
composition appears explicitly as an axiom, reflecting the fact that smoothness on Euclidean space
remains the universal reference point for any abstract smooth structure.

The main results of the paper show that this information--theoretic notion of smoothness is equivalent
to the classical one. Entropy coordinate charts are smooth diffeomorphisms onto their images, entropy
transition maps are smooth, and entropy--smooth functions coincide with classical smooth functions.
As a consequence, the category of entropy--smooth manifolds is equivalent to the category of smooth
manifolds. The framework is stable under perturbations of the information probes and is compatible
with products, submanifolds, immersions, submersions, and diffeomorphisms.

This work forms the zeroth--order layer of a broader information--theoretic approach to geometry developed
in the author’s previous papers, where diffusion semigroups and entropy asymptotics were shown to encode
Riemannian metrics and curvature. Here we isolate smooth structure itself, prior to any metric or geometric
data, and show that it too can be captured by entropy at the smallest scales. Subsequent work will connect
the entropy--smooth framework developed here to the information--theoretic reconstruction of Riemannian
geometry and curvature, and to the development of entropy--based differential operators.

\section{Basic definitions}
Recall that a \emph{topological $n$--manifold} is a Hausdorff, second countable space that is locally Euclidean, i.e. in which every point admits a neighborhood homeomorphic to an open subset of $\mathbb{R}^n$.  
Since any Hausdorff, second countable space carries a canonical Borel $\sigma$--algebra generated by its open sets, it is natural to speak of the Borel $\sigma$--algebra of $X$ without further assumptions. In the following we will work with a topological $n$-manifold $X$.

For finite measures $\nu$ and $\mu$ on $X$, the Kullback--Leibler divergence \cite{kullback1997information} is defined as
\[
D(\nu\Vert\mu)=
\begin{cases}
\displaystyle\int_X \log\!\left( \tfrac{d\nu}{d\mu} \right)\, d\nu, & \nu\ll\mu,\\[1em]
+\infty, & \text{otherwise.}
\end{cases}
\]
Here, $\tfrac{d\nu}{d\mu}$ is the Radon--Nikodym derivative, which exists when $\nu$ is absolutely continuous w.r.t $\mu$, written $\nu\ll\mu$. This is a purely measure--theoretic quantity requiring no differentiable structure.

\begin{definition}
Let $x\in X$ and $\varepsilon_0>0$. A \emph{local information probe} is a Borel probability measure $\mu_{x,\varepsilon}$ on $X$ such that  
$\operatorname{supp}(\mu_{x,\varepsilon}) \subset U_x$, whenever $0<\varepsilon<\varepsilon_0$,
for some open neighborhood $U_x$ of $x$, and $\mu_{x,\varepsilon}\to \delta_x$ weakly as $\varepsilon\to 0$.
\end{definition}

Let $f:X\to\mathbb{R}$ be bounded and continuous.  The key lesson of the audit is that
multiplicative perturbations of a fixed probe, $(1+t f)\mu_{x,\varepsilon}$, only
measure \emph{values} of $f$ at small scale and therefore cannot encode $n$ independent
infinitesimal directions when $n>1$.  To obtain genuinely local directional
information from KL divergence we instead use \emph{exponential tilts of
\emph{increments}}.

\medskip
\noindent\textbf{Increment normalization.}
For $x\in X$, $\varepsilon>0$, and a bounded function $f$, define the normalized increment
\[
\delta_{x,\varepsilon} f(y):=\frac{f(y)-f(x)}{\sqrt{\varepsilon}}.
\]

\begin{definition}[Entropic tilt]
Fix $x\in X$ and $\varepsilon>0$. For a bounded continuous $f:X\to\mathbb{R}$ and $t\in\mathbb{R}$ define the (centered, normalized) exponential tilt of $\mu_{x,\varepsilon}$ by
\[
\frac{d\mu^{t,f}_{x,\varepsilon}}{d\mu_{x,\varepsilon}}(y)
:= \frac{\exp\bigl(t\,\delta_{x,\varepsilon} f(y)\bigr)}{Z_{x,\varepsilon}(t,f)},
\qquad
Z_{x,\varepsilon}(t,f):=\int_X \exp\bigl(t\,\delta_{x,\varepsilon} f\bigr)\,d\mu_{x,\varepsilon}.
\]
\end{definition}

\begin{definition}[Quadratic KL response]
For bounded continuous $f:X\to\mathbb{R}$, define the KL response
\[
\mathrm{Ent}_{x,\varepsilon}(t,f):=D\bigl(\mu^{t,f}_{x,\varepsilon}\,\Vert\,\mu_{x,\varepsilon}\bigr).
\]
The (increment-based) quadratic entropy response is
\[
I_{x,\varepsilon}(f)
:= \left.\frac{d^2}{dt^2}\right|_{t=0} \mathrm{Ent}_{x,\varepsilon}(t,f)
\in[0,\infty]
\]
whenever the derivative exists.  The (small--scale) entropy coefficient is
\[
I_x(f):=\lim_{\varepsilon\to 0} I_{x,\varepsilon}(f),
\]
when the limit exists and is finite.
\end{definition}

\begin{remark}[What $I_{x,\varepsilon}$ measures]
A standard cumulant computation shows
\[
I_{x,\varepsilon}(f)=\mathrm{Var}_{\mu_{x,\varepsilon}}\bigl(\delta_{x,\varepsilon} f\bigr)
=\frac{1}{\varepsilon}\int_X \bigl(f(y)-f(x)\bigr)^2\,d\mu_{x,\varepsilon}(y)
-\Bigl(\frac{1}{\sqrt{\varepsilon}}\int_X \bigl(f(y)-f(x)\bigr)\,d\mu_{x,\varepsilon}(y)\Bigr)^2,
\]
so $I_{x,\varepsilon}$ measures \emph{local variation} of $f$ at scale $\varepsilon$.
Unlike the multiplicative model, this does not collapse to $f(x)^2$.
\end{remark}

\begin{definition}[Entropy-smooth]
A continuous function $f:X\to\mathbb{R}$ is \emph{entropy--smooth} if $I_x(f)$ exists and is finite for all $x\in X$.
\end{definition}

\begin{definition}[Joint entropy coefficient]
For bounded continuous $f,g:X\to\mathbb{R}$ and $\varepsilon>0$ define the $\varepsilon$--level joint entropy coefficient by
\[
I_{x,\varepsilon}(f,g)
:=\left.\frac{\partial^2}{\partial t\,\partial s}\right|_{(t,s)=(0,0)}
D\bigl(\mu^{t f+s g,\,(f,g)}_{x,\varepsilon}\,\Vert\,\mu_{x,\varepsilon}\bigr),
\]
where the two--feature tilt is
\[
\frac{d\mu^{t f+s g,\,(f,g)}_{x,\varepsilon}}{d\mu_{x,\varepsilon}}(y)
:= \frac{\exp\bigl(t\,\delta_{x,\varepsilon} f(y)+s\,\delta_{x,\varepsilon} g(y)\bigr)}{\int_X\exp\bigl(t\,\delta_{x,\varepsilon} f+s\,\delta_{x,\varepsilon} g\bigr)\,d\mu_{x,\varepsilon}}.
\]
Whenever this exists, one has the covariance identity
\begin{align*}
I_{x,\varepsilon}(f,g)&=\mathrm{Cov}_{\mu_{x,\varepsilon}}\bigl(\delta_{x,\varepsilon} f,\,\delta_{x,\varepsilon} g\bigr)
=\frac{1}{\varepsilon}\int_X \bigl(f(y)-f(x)\bigr)\bigl(g(y)-g(x)\bigr)\,d\mu_{x,\varepsilon}(y) \\
-&\Bigl(\frac{1}{\sqrt{\varepsilon}}\int_X \bigl(f(y)-f(x)\bigr)\,d\mu_{x,\varepsilon}(y)\Bigr)
\Bigl(\frac{1}{\sqrt{\varepsilon}}\int_X \bigl(g(y)-g(x)\bigr)\,d\mu_{x,\varepsilon}(y)\Bigr).
\end{align*}

For entropy--smooth $f,g$ we define
\[
I_x(f,g):=\lim_{\varepsilon\to 0} I_{x,\varepsilon}(f,g),
\]
when the limit exists in $\mathbb{R}$.
\end{definition}
\section{Axioms for Entropy--Smoothness}
We impose the following structural axioms for an \emph{entropy-smooth $n$-manifold}.

\begin{axiom}[Locality]
\label{axiom:1}
For each point $x\in X$ and for sufficiently small $\varepsilon>0$, we assume the existence of a local information probe $\mu_{x,\varepsilon}$.
\end{axiom}
The local information probes play the role of infinitesimal mollifiers or diffusion kernels.

\begin{axiom}[Continuity of Information Probes]
\label{axiom:2}
The map $(x,\varepsilon)\mapsto \mu_{x,\varepsilon}$ is continuous in the weak topology.
\end{axiom}

\begin{axiom}[Coordinate Non--degeneracy]
\label{axiom:3}
For every $x\in X$ there exist an open neighbourhood $U\subset X$ of $x$
and \emph{entropy--smooth} functions $f_1,\dots,f_n : U \to \mathbb{R}$ such 
that:
\begin{enumerate}
\item the map 
\[
F = (f_1,\dots,f_n) : U \longrightarrow F(U)\subset\mathbb{R}^n
\]
is a homeomorphism onto an open subset of $\mathbb{R}^n$;
\item for every $y\in U$, the information Gram matrix
\[
G_y(F) := \big(I_y(f_i,f_j)\big)_{1\le i,j\le n}
\]
is positive definite.
\end{enumerate}
\end{axiom}

\begin{axiom}[Entropy Regularity]\label{axiom:4}
\hspace{0.5cm}
\begin{enumerate} 
\item for any finite tuple $(f_1,\dots,f_k)$ of entropy--smooth functions, all \emph{joint entropy coefficients} $I_x(f_i,f_j)$, $1\leq i,j\leq k$ exist, are finite, and depend continuously on $x$;
\item if $h\in C^\infty(\mathbb{R}^k)$ and $f_1,\dots,f_k$ are entropy--smooth, then $h(f_1,\dots,f_k)$ is entropy--smooth;
\item the class of entropy--smooth functions is closed under finite sums and products.
\end{enumerate}
\end{axiom}
Note that the appearance of $C^\infty(\mathbb{R}^k)$ in this axiom refers only
to the classical smooth structure on Euclidean space; it imposes no smooth
structure on $X$ and does not presuppose that $X$ itself carries any
differentiable atlas.

\begin{axiom}[Topological Compatibility]
\label{axiom:5}
The topology of $X$ is the coarsest topology for which all entropy--smooth functions are continuous.
\end{axiom}

A topological $n$-manifold satisfying Axioms \ref{axiom:1}-\ref{axiom:5} will be called an \emph{entropy-smooth $n$-manifold}.

\begin{definition}[Entropy coordinate chart]\label{definition:entropy_chart}
Let $x\in X$. An entropy coordinate chart around $x$ is a tuple
$F=(f_1,\dots,f_n)$ of entropy–smooth functions defined on a neighbourhood
$U\ni x$ such that:

\begin{enumerate}
\item $f_i(x)=0$ for all $i$ (the chart is centred at $x$);

\item the map $F:U\to\mathbb{R}^n$ is a homeomorphism onto its image;

\item the information Gram matrix
      $G_x(F):=(I_x(f_i,f_j))_{1\le i,j\le n}$ is positive definite.
\end{enumerate}
\end{definition}

\begin{lemma}[Elementary Smoothness Criterion]
\label{lem:elementary-smoothness}
Let $V,W\subset\mathbb{R}^n$ be open sets and let $G:V\to W$ be a homeomorphism.
Assume that for every smooth function $h\in C^\infty(W)$, the pullback
$h\circ G$ belongs to $C^\infty(V)$. Then $G$ is a smooth map.
\end{lemma}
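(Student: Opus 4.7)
The plan is to test the hypothesis against the simplest possible family of smooth functions on $W$: the Euclidean coordinate projections. Let $y_1,\dots,y_n:\mathbb{R}^n\to\mathbb{R}$ denote the standard coordinate functions, so that each $y_i$ is globally $C^\infty$ on $\mathbb{R}^n$ and, \emph{a fortiori}, its restriction to the open set $W$ belongs to $C^\infty(W)$.

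By the hypothesis, each pullback $y_i\circ G$ then belongs to $C^\infty(V)$. But by definition of the coordinate projections, $y_i\circ G$ is precisely the $i$-th component function $G_i:V\to\mathbb{R}$ of the map $G=(G_1,\dots,G_n)$. Thus every component of $G$ is smooth on $V$.

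Finally, smoothness of a map into $\mathbb{R}^n$ is characterized component-wise: $G$ is smooth if and only if each $G_i$ is smooth. Since $G(V)\subset W$ and each $G_i\in C^\infty(V)$, it follows that $G:V\to W$ is smooth.

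There is no genuine obstacle in this argument; the content of the lemma is simply that the algebra $C^\infty(W)$ contains enough functions to detect smoothness of a map into $W$, and the coordinate projections already suffice. The lemma will later be applied with $G$ taken to be the transition map between two entropy coordinate charts, at which point the real work consists in verifying that the pullback hypothesis holds in the entropy setting.
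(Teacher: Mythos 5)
Your proposal is correct and is essentially identical to the paper's own proof: both pull back the coordinate projections $y_i\in C^\infty(W)$ to conclude that each component $G_i=y_i\circ G$ is smooth, and then invoke the componentwise characterization of smoothness for maps into $\mathbb{R}^n$.
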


\begin{proof}
Let $y_i:W\to\mathbb{R}$ denote the $i$th coordinate projection, $y_i(w)=w_i$.
Each $y_i$ is smooth, hence by assumption $y_i\circ G\in C^\infty(V)$ for
every $i=1,\dots,n$. Since
\[
G(x) = \bigl(y_1\circ G(x),\dots,y_n\circ G(x)\bigr),
\]
each component of $G$ is smooth, so $G$ is smooth. \qedhere
\end{proof}

\begin{lemma}[Uniqueness of the smooth structure on $\Omega\subset\mathbb{R}^n$]
\label{lem:uniqueness-smooth-structure-Rn}
Let $\Omega\subset\mathbb{R}^n$ be open and let $A$ be an $\mathbb{R}$--algebra of
functions $A\subset \mathbb{R}^\Omega$ such that:
\begin{enumerate}
\item the coordinate projections $\pi_i:\Omega\to\mathbb{R}$, $\pi_i(z)=z_i$, belong to $A$ for $i=1,\dots,n$;
\item $A$ is closed under smooth composition: if $f_1,\dots,f_k\in A$ and
      $h\in C^\infty(\mathbb{R}^k)$, then $h(f_1,\dots,f_k)\in A$;
\item the weakest topology on $\Omega$ making all functions in $A$ continuous
      coincides with the Euclidean subspace topology.
\end{enumerate}
Then $A = C^\infty(\Omega)$.
\end{lemma}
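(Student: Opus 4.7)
The plan is to establish $A=C^\infty(\Omega)$ by proving both inclusions, using hypotheses (1) and (2) for one direction and (3) for the other.

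\emph{Inclusion $C^\infty(\Omega)\subseteq A$.} By (1) the coordinate projections $\pi_i\in A$, and by (2) for every $h\in C^\infty(\mathbb{R}^n)$ we immediately obtain $h|_\Omega=h\circ(\pi_1,\dots,\pi_n)\in A$. To extend this to an arbitrary $f\in C^\infty(\Omega)$, which need not extend smoothly to all of $\mathbb{R}^n$, I would use a smooth partition of unity: cover $\Omega$ by relatively compact open sets $U_j$ with $\overline{U_j}\subset\Omega$, fix a subordinate partition $\{\chi_j\}$, and observe that each $\chi_j f$ extends by zero to some $\tilde f_j\in C^\infty(\mathbb{R}^n)$. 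Then $f=\sum_j \tilde f_j|_\Omega$ is a locally finite sum of elements of $A$, and a locality property for $A$ (tacitly inherited from the pointwise nature of entropy--smoothness) promotes the finite local sums to global membership of $f$ in $A$.

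\emph{Inclusion $A\subseteq C^\infty(\Omega)$.} Condition (3) first yields $A\subseteq C(\Omega)$, since any $f\in A$ that were discontinuous at some point would force the initial topology generated by $A$ to be strictly finer than the Euclidean one. Upgrading continuity to smoothness is the delicate step. The natural route is via the $C^\infty$-ring structure: by (1) and (2), $A$ is a $C^\infty$-ring with distinguished generators $\pi_1,\dots,\pi_n$, so the universal property of the free $C^\infty$-ring on $n$ generators yields a surjection $C^\infty(\mathbb{R}^n)\twoheadrightarrow A_0$ onto the sub-$C^\infty$-ring generated by the projections. One then shows, using (3) together with point separation on $\Omega$, that $A=A_0$ and that the surjection is in fact the canonical restriction $C^\infty(\mathbb{R}^n)\to C^\infty(\Omega)$.

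\emph{Main obstacle.} The principal difficulty I anticipate is precisely the second inclusion: conditions (1)-(3) as literally stated are also satisfied by the strictly larger algebra $A=C(\Omega)$, which is an $\mathbb{R}$-algebra closed under smooth composition, contains the projections, and has Euclidean initial topology. Thus the lemma cannot be proved from (1)-(3) alone without invoking an additional hypothesis---either a minimality or $C^\infty$-ring--generation clause, or a locality/sheaf axiom. In the intended entropy--smooth application, this supplementary hypothesis is automatic from the pointwise, local character of entropy--smoothness, and the proof will need to invoke it explicitly at the critical step where one excludes functions in $A$ that are merely continuous but not smooth.
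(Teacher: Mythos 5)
Your diagnosis is correct, and it is the decisive observation about this lemma: hypotheses (1)--(3) as stated do \emph{not} imply $A=C^\infty(\Omega)$, because $A=C(\Omega)$ satisfies all three --- it is an $\mathbb{R}$--algebra containing the projections, post-composition with a smooth $h$ preserves continuity, and the initial topology it generates is the Euclidean one --- yet it strictly contains $C^\infty(\Omega)$. The paper's own proof breaks at exactly the point you predict: it asserts that ``(1)--(2) imply that any component of $\Phi$ is obtained from the coordinate map $z\mapsto z$ by smooth composition,'' which conflates \emph{closure} under smooth composition with \emph{generation} by smooth composition from $\pi_1,\dots,\pi_n$. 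Only the latter would force elements of $A$ to be classically smooth, and it is not among the hypotheses. A corrected statement needs an extra clause --- for instance that $A$ is the smallest algebra satisfying (1)--(2) subject to a locality/sheaf axiom, or that $A$ is generated as a $C^\infty$--ring by the $\pi_i$ after suitable localization --- and that clause must then be verified for the entropy--smooth algebra $E(\Omega)$ when the lemma is invoked in Lemma~\ref{lem:entropy-equals-classical-Rn}; Axioms~\ref{axiom:3}--\ref{axiom:5} do not obviously supply it.

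Your handling of the other inclusion is also more careful than the paper's one-line claim. The subalgebra $\{h\circ(\pi_1,\dots,\pi_n):h\in C^\infty(\mathbb{R}^n)\}$ of restrictions of globally smooth functions likewise satisfies (1)--(3) for $\Omega\neq\mathbb{R}^n$, but it omits smooth functions that blow up at the boundary (e.g.\ $x\mapsto 1/x$ on $\Omega=(0,1)$), so $C^\infty(\Omega)\subseteq A$ genuinely requires the partition-of-unity argument you sketch \emph{together with} closure of $A$ under locally finite sums --- again an unstated hypothesis. In short, yours is not a failed proof but a correct identification that the statement is under-hypothesized; the ingredients you propose (a $C^\infty$--ring generation clause plus a locality axiom) are the right ones for a repaired version, and the repair has downstream consequences for every result that cites this lemma.
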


\begin{proof}
The inclusion $C^\infty(\Omega)\subset A$ follows by writing any smooth function
$F\in C^\infty(\Omega)$ locally as a smooth composition of the coordinate
functions $(\pi_1,\dots,\pi_n)$ and using (2). For the converse inclusion, consider
the map
\[
\Phi:\Omega \longrightarrow \mathbb{R}^A,\qquad
\Phi(z) = \bigl(f(z)\bigr)_{f\in A},
\]
which is well--defined by viewing $A$ as an index set. Condition (3) implies that
$\Phi$ is a topological embedding, and (1)--(2) imply that any component of $\Phi$
is obtained from the coordinate map $z\mapsto z$ by smooth composition. One
checks that the image $\Phi(\Omega)$ is an embedded submanifold of $\mathbb{R}^A$
with the unique smooth structure for which all coordinates $f\in A$ are smooth,
and that this structure coincides with the standard smooth structure on
$\Omega\subset\mathbb{R}^n$ via the inverse of $\Phi$. In particular, every
$f\in A$ is classically smooth on $\Omega$, so $A\subset C^\infty(\Omega)$.
For details, we refer to \cite{sikorski1967differential} or \cite{Nestruev03}.
\end{proof}

\begin{lemma}[Entropy--smooth equals classical smooth on open subsets of $\mathbb{R}^n$]
\label{lem:entropy-equals-classical-Rn}
Let $\Omega\subset\mathbb{R}^n$ be open and let $\{\mu_{z,\varepsilon}\}_{z\in\Omega,\varepsilon>0}$
be a system of local information probes on $\Omega$ such that the standard coordinate map
\[
\pi=(\pi_1,\dots,\pi_n):\Omega\longrightarrow\mathbb{R}^n,\qquad \pi_i(z)=z_i,
\]
is an entropy coordinate chart. Denote by $E(\Omega)$ the class of entropy--smooth functions
on $\Omega$. Then
\[
E(\Omega)=C^\infty(\Omega).
\]
\end{lemma}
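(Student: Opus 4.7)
My plan is to realize $E(\Omega)$ as an $\mathbb{R}$-algebra satisfying the three hypotheses of Lemma \ref{lem:uniqueness-smooth-structure-Rn}, and then invoke that uniqueness lemma to conclude $E(\Omega) = C^\infty(\Omega)$. The axioms of the entropy-smooth framework have been engineered precisely so that this reduction works: Axiom \ref{axiom:3} delivers coordinate projections, Axiom \ref{axiom:4} delivers the algebraic/composition structure, and Axiom \ref{axiom:5} delivers the topological compatibility.

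First, I would verify condition (1) of Lemma \ref{lem:uniqueness-smooth-structure-Rn}, namely that each coordinate projection $\pi_i$ lies in $E(\Omega)$. This is immediate from the hypothesis that $\pi=(\pi_1,\dots,\pi_n)$ is an entropy coordinate chart, since by definition the components of such a chart are entropy-smooth. Next, I would check condition (2): closure of $E(\Omega)$ under composition with smooth functions. This is exactly Axiom \ref{axiom:4}(2). The $\mathbb{R}$-algebra structure then comes for free: closure under finite sums and products is Axiom \ref{axiom:4}(3), closure under scalar multiplication follows from Axiom \ref{axiom:4}(2) applied to $h(t)=ct$, and the inclusion of constants follows by composing any $f \in E(\Omega)$ with a constant $h \in C^\infty(\mathbb{R})$. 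Finally, to verify condition (3) I would invoke Axiom \ref{axiom:5}: the topology of $\Omega$, which is the Euclidean subspace topology by construction, coincides with the weakest topology making all entropy-smooth functions continuous.

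With all three hypotheses of Lemma \ref{lem:uniqueness-smooth-structure-Rn} verified, that lemma yields $E(\Omega)=C^\infty(\Omega)$ directly. I do not anticipate a substantive mathematical obstacle in this argument: the proof is essentially a translation of the entropy axioms into the premises of the uniqueness lemma. The one place that merits care is a bookkeeping point at the outset — one must confirm that the probe system $\{\mu_{z,\varepsilon}\}$ in the hypothesis genuinely equips $\Omega$ with a full entropy-smooth structure in the sense of Axioms \ref{axiom:1}--\ref{axiom:5}, so that the class $E(\Omega)$ and Axiom \ref{axiom:5} are legitimately available. Because this lemma is stated inside the framework of entropy-smooth manifolds, this is a setup convention rather than a computational step, and the substantive content is the algebraic/topological reduction above.
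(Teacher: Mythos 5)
Your proposal is correct and follows essentially the same route as the paper: both verify that $E(\Omega)$ satisfies the three hypotheses of Lemma~\ref{lem:uniqueness-smooth-structure-Rn} (coordinate projections via the entropy chart hypothesis, smooth-composition closure via Axiom~\ref{axiom:4}, topological compatibility via Axiom~\ref{axiom:5}) and then invoke that lemma. Your additional remarks on the $\mathbb{R}$-algebra structure and the setup convention are sensible elaborations of the paper's more terse argument, not a different approach.
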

\begin{proof}
Let $E(\Omega)$ denote the algebra of entropy--smooth functions on $\Omega$.
By Axioms~\ref{axiom:3}--\ref{axiom:5}, $E(\Omega)$
\begin{itemize}
\item contains the coordinate projections $\pi_i$,
\item is closed under smooth composition,
\item and induces the Euclidean topology on $\Omega$.
\end{itemize}
Thus Lemma~\ref{lem:uniqueness-smooth-structure-Rn} applies with $A=E(\Omega)$
and yields $E(\Omega)=C^\infty(\Omega)$.
\end{proof}

For a measurable map $F : X \to Y$ and a measure $\nu$ on $X$, the pushforward $F_\#\nu$ is the measure on $Y$ defined by
\[
F_\#\nu(A) := \nu(F^{-1}(A)), \qquad A \subseteq Y \ \text{Borel}.
\]

\begin{theorem}[Entropy coordinate charts are smooth]
\label{thm:entropy-charts-smooth}
Let $F=(f_1,\dots,f_n):U\to\mathbb{R}^n$ and 
$G=(g_1,\dots,g_n):V\to\mathbb{R}^n$ be two entropy coordinate charts with 
$U\cap V\neq\emptyset$.  Then the transition map
\[
G\circ F^{-1}:F(U\cap V)\longrightarrow G(U\cap V)
\]
is a $C^\infty$--diffeomorphism.  In particular, each entropy coordinate 
chart is a diffeomorphism onto its image.
\end{theorem}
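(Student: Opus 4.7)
The plan is to reduce smoothness of the transition map to Lemmas \ref{lem:elementary-smoothness} and \ref{lem:entropy-equals-classical-Rn} by transporting the entropy structure through $F$. Set $\Phi := G \circ F^{-1}$, a homeomorphism of the open sets $W := F(U\cap V)$ and $G(U\cap V)$ in $\mathbb{R}^n$. By Lemma \ref{lem:elementary-smoothness}, it suffices to show that $h\circ\Phi\in C^\infty(W)$ for every $h\in C^\infty(G(U\cap V))$.

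The first step is to push the information probes forward through $F$: define $\tilde\mu_{z,\varepsilon}:=F_{\#}\mu_{F^{-1}(z),\varepsilon}$ on $W$. Because $F$ is a homeomorphism, weak convergence and continuity in $(z,\varepsilon)$ transfer, so Axioms \ref{axiom:1}--\ref{axiom:2} hold for $\{\tilde\mu\}$. The pushforward invariance of KL divergence, together with the identity $F_{\#}\bigl((1+t(h\circ F))\mu\bigr)=(1+th)(F_{\#}\mu)$, yields the pointwise identities
\[
\tilde I_{z,\varepsilon}(h_1,h_2) \;=\; I_{F^{-1}(z),\varepsilon}(h_1\circ F,\; h_2\circ F),\qquad
\tilde I_z(h_1,h_2) \;=\; I_{F^{-1}(z)}(h_1\circ F,\; h_2\circ F),
\]
so $h$ is entropy--smooth with respect to $\{\tilde\mu\}$ if and only if $h\circ F$ is entropy--smooth in the original sense on $U\cap V$; Axioms \ref{axiom:4}--\ref{axiom:5} transfer along this same dictionary. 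Since the standard projections $\pi_i$ on $W$ pull back to the entropy--smooth $f_i$ with positive--definite Gram matrix $G_{F^{-1}(z)}(F)$, the map $\pi=(\pi_1,\dots,\pi_n)$ is, after a base--point translation, an entropy coordinate chart on $W$ in the pushforward system. Lemma \ref{lem:entropy-equals-classical-Rn} then gives $E(W)=C^\infty(W)$.

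Given $h\in C^\infty(G(U\cap V))$, the components of $G$ are entropy--smooth, so by Axiom \ref{axiom:4}(2) the composition $h\circ G$ is entropy--smooth on $U\cap V$. Via the $F$--dictionary, $h\circ\Phi=(h\circ G)\circ F^{-1}$ is entropy--smooth on $W$, and hence classically smooth by the previous step. Lemma \ref{lem:elementary-smoothness} yields $\Phi\in C^\infty(W;\mathbb{R}^n)$, and exchanging the roles of $F$ and $G$ shows $\Phi^{-1}$ is smooth as well, so $\Phi$ is a $C^\infty$--diffeomorphism. The final sentence of the theorem follows by specialising $G$ to the standard coordinate chart on $F(U)$ equipped with $\{\tilde\mu\}$.

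The main technical obstacle I anticipate is the bookkeeping in the pushforward step. The pointwise invariance of the KL divergence is clean, but because $I_x(f)$ is defined as a second derivative at $t=0$ and $I_x(f,g)$ as an iterated limit with an accompanying uniformity requirement, one must check that these limits genuinely commute with pushforward by $F$. The key input is that $F$ is a homeomorphism, so that $h\mapsto h\circ F$ is a bijection on bounded continuous functions and preserves all the limit structure used to define entropy--smoothness; granted this, the transfer of Axioms \ref{axiom:4}(1) and \ref{axiom:5} to the pushforward system is routine and the rest of the argument is a short composition of the two lemmas.
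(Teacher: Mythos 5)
Your proposal is correct and follows essentially the same route as the paper's own proof: pushing the probes forward through $F$, invoking invariance of the Kullback--Leibler divergence under a measurable bijection to transfer entropy--smoothness, applying Lemma~\ref{lem:entropy-equals-classical-Rn} to identify $E(F(U\cap V))$ with $C^\infty(F(U\cap V))$, and concluding via Lemma~\ref{lem:elementary-smoothness} with the symmetric argument for $\Phi^{-1}$. The bookkeeping concern you flag (commuting the $t$- and $\varepsilon$-limits with pushforward) is handled in the paper exactly as you suggest, by noting that $h\mapsto h\circ F$ preserves the entire limit structure pointwise.
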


\begin{proof}
Let $F=(f_1,\dots,f_n):U\to\mathbb{R}^n$ and $G=(g_1,\dots,g_n):V\to\mathbb{R}^n$ be entropy
coordinate charts with $W:=U\cap V\neq\emptyset$, and set
\[
\widetilde U := F(W)\subset\mathbb{R}^n,\qquad \widetilde V := G(W)\subset\mathbb{R}^n.
\]
We must show that the transition map
\[
\Phi := G\circ F^{-1} : \widetilde U \longrightarrow \widetilde V
\]
is a $C^\infty$–diffeomorphism in the classical sense.

For $z\in\widetilde U$ let $x:=F^{-1}(z)\in W$, and define a family of probability measures
$\{\widetilde\mu_{z,\varepsilon}\}$ on $\widetilde U$ by
\[
\widetilde\mu_{z,\varepsilon} := F_\#\mu_{x,\varepsilon},
\]
where $F_\#$ denotes pushforward:
\[
F_\#\mu(A) := \mu(F^{-1}(A)),\qquad A\subseteq \widetilde U\ \text{Borel}.
\]
Since $F$ is a homeomorphism from $W$ onto $\widetilde U$
and $\mu_{x,\varepsilon}\to\delta_x$ weakly as $\varepsilon\to0$, it follows that
$\widetilde\mu_{z,\varepsilon}\to\delta_z$ weakly as $\varepsilon\to0$. Locality (Axiom~\ref{axiom:1}) and
continuity (Axiom~\ref{axiom:2}) of $(x,\varepsilon)\mapsto\mu_{x,\varepsilon}$ imply the corresponding
properties for $(z,\varepsilon)\mapsto\widetilde\mu_{z,\varepsilon}$. Thus $\{\widetilde\mu_{z,\varepsilon}\}$
is a system of local information probes on the open set $\widetilde U\subset\mathbb{R}^n$.

Let $\varphi:\widetilde U\to\mathbb{R}$ be bounded and continuous, and put
$\widehat\varphi := \varphi\circ F : W\to\mathbb{R}$. Fix $z\in\widetilde U$ and write $x:=F^{-1}(z)\in W$.

We claim that the quadratic entropy coefficients on $\widetilde U$ computed using the pushed--forward probes
$\widetilde\mu_{z,\varepsilon}=F_\#\mu_{x,\varepsilon}$ satisfy
\begin{equation}\label{eq:Ent-push}
I^{\widetilde U}_z(\varphi)= I_x(\widehat\varphi),
\end{equation}
whenever either side is finite.

To see this, consider the entropic tilt on $\widetilde U$:
\[
\frac{d\widetilde\mu^{t,\varphi}_{z,\varepsilon}}{d\widetilde\mu_{z,\varepsilon}}(u)
:=\frac{\exp\bigl(t\,\delta_{z,\varepsilon}\varphi(u)\bigr)}{\widetilde Z_{z,\varepsilon}(t,\varphi)},
\qquad
\delta_{z,\varepsilon}\varphi(u)=\frac{\varphi(u)-\varphi(z)}{\sqrt\varepsilon}.
\]
For $u=F(y)$ with $y\in W$ we have $\varphi(u)=\widehat\varphi(y)$ and $\varphi(z)=\widehat\varphi(x)$, hence
\[
\delta_{z,\varepsilon}\varphi\bigl(F(y)\bigr)=\delta_{x,\varepsilon}\widehat\varphi(y).
\]
It follows that the tilted measure is the pushforward of the tilted measure on $W$:
\[
\widetilde\mu^{t,\varphi}_{z,\varepsilon}=F_\#\mu^{t,\widehat\varphi}_{x,\varepsilon},
\]
and the partition functions agree, $\widetilde Z_{z,\varepsilon}(t,\varphi)=Z_{x,\varepsilon}(t,\widehat\varphi)$. Since
Kullback--Leibler divergence is invariant under pushforward by a measurable bijection, we obtain
\[
\mathrm{Ent}^{\widetilde U}_{z,\varepsilon}(t,\varphi)
= D\bigl(\widetilde\mu^{t,\varphi}_{z,\varepsilon}\,\Vert\,\widetilde\mu_{z,\varepsilon}\bigr)
= D\bigl(\mu^{t,\widehat\varphi}_{x,\varepsilon}\,\Vert\,\mu_{x,\varepsilon}\bigr)
=\mathrm{Ent}_{x,\varepsilon}(t,\widehat\varphi).
\]
Taking the second derivative at $t=0$ and then the limit $\varepsilon\to0$ yields~\eqref{eq:Ent-push}.
In particular, $\varphi$ is entropy--smooth on $\widetilde U$ if and only if $\widehat\varphi$ is entropy--smooth on $W$.

As above, each coordinate projection $\pi_i(z)=z_i$ is entropy--smooth on
$\widetilde U$, and the information Gram matrix
\[
G^{\widetilde U}_z := \bigl(I^{\widetilde U}_z(\pi_i,\pi_j)\bigr)_{1\le i,j\le n}
\]
is positive definite for all $z\in\widetilde U$. Thus
$\pi=(\pi_1,\dots,\pi_n):\widetilde U\to\mathbb{R}^n$ is an entropy coordinate chart.

Let $E(\widetilde U)$ denote the algebra of entropy--smooth functions on
$\widetilde U$. By Axioms~\ref{axiom:3}--\ref{axiom:5}, $E(\widetilde U)$ contains
the coordinate projections, is closed under smooth composition, and induces
the Euclidean topology. Hence Lemma~\ref{lem:entropy-equals-classical-Rn} applies
(with $\Omega=\widetilde U$) and yields
\[
E(\widetilde U)= C^\infty(\widetilde U).
\]
Thus a function on $\widetilde U$ is entropy--smooth if and only if it is
classically $C^\infty$.

For each $j=1,\dots,n$ define
\[
\psi_j := g_j\circ F^{-1} : \widetilde U \longrightarrow \mathbb{R},
\]
so that $\Phi = (\psi_1,\dots,\psi_n)$. Since each $g_j$ is entropy--smooth on $V$ and
$F^{-1}$ is a homeomorphism from $\widetilde U$ onto $W\subset U\cap V$, the relation
\eqref{eq:Ent-push} shows that each $\psi_j$ is entropy--smooth on $\widetilde U$, hence
$\psi_j\in E(\widetilde U)=C^\infty(\widetilde U)$.

Thus $\Phi:\widetilde U\to\widetilde V$ has classically
$C^\infty$ components.

Let now $h \in C^\infty(\widetilde V)$ be arbitrary. Then $h\circ G$ is smooth on $W$, and
by closure under smooth composition and entropy coordinate charts (Axiom~\ref{axiom:4})
we deduce that
\[
h\circ\Phi = h\circ G \circ F^{-1}
\]
is entropy–smooth on $\widetilde U$. As above, this implies $h\circ\Phi\in C^\infty(\widetilde U)$.
Hence every smooth function on $\widetilde V$ pulls back under $\Phi$ to a smooth function
on $\widetilde U$, and the Elementary Smoothness Criterion
(Lemma~\ref{lem:elementary-smoothness}) implies that $\Phi:\widetilde U\to\widetilde V$ is a
smooth map.

The same argument applied to $\Phi^{-1} = F\circ G^{-1}$ shows that $\Phi^{-1}$ is
also smooth. Hence $\Phi$ is a $C^\infty$–diffeomorphism between the open subsets
$\widetilde U,\widetilde V\subset\mathbb{R}^n$, completing the proof.
\end{proof}

\subsection{Discussion of the Axioms}

We explain the conceptual role of each axiom and why the set of axioms above is
both natural and minimal.  The key idea here is that smooth structure on a
manifold is not introduced by taking limits or derivatives on~$X$, but by
specifying which families of functions qualify as local coordinates and
how they behave under composition.  The entropy--smooth axioms isolate
precisely the amount of structure needed to recover the classical
$C^\infty$ atlas from purely information--theoretic primitives.

The \emph{locality} axiom asserts that the measures $\mu_{x,\varepsilon}$ remain
supported near~$x$ as $\varepsilon\to 0$.  It ensures that the entropy
coefficients $I_x(f)$ capture genuinely \emph{local} behavior of $f$
around~$x$, rather than global effects.  Without locality, the probes could
blend distinct regions of~$X$ and no meaningful coordinate structure could
be extracted.  Locality is a minimal topological control: it assumes
nothing about differentiability, geometry, or regularity.

The \emph{continuity of information probes} axiom, i.e. the weak continuity of $(x,\varepsilon)\mapsto\mu_{x,\varepsilon}$
guarantees that the entropy response varies stably under small changes in
position and scale.  This is required in order for the entropy coefficient
$I_x(f)$ to vary continuously in~$x$ and for entropy coordinates to be
well--behaved.  It is the weakest reasonable regularity assumption: no
smoothness, differentiability, or metric structure is invoked.

The \emph{coordinate non--degeneracy} axiom identifies a family of continuous functions that separate
points and form valid coordinate charts.  Importantly, it \emph{does not}
assume these functions are smooth; instead, it uses the finiteness and
continuity of the entropy coefficient $I_x(f)$ to single out the functions
that behave like coordinates under infinitesimal entropy perturbation.
This provides the essential ``dimension'' information and is analogous to
requiring that a candidate chart be a homeomorphism onto an open set.
Nothing about higher--order differentiability is assumed here.

The \emph{entropy regularity} axiom is the structural core of the theory. It specifies how the class of
entropy--smooth functions behaves under algebraic operations and under
composition with classical smooth functions on $\mathbb{R}^k$.  The
inclusion of $h\in C^\infty(\mathbb{R}^k)$ does not presuppose any smooth
structure on~$X$; it only uses the classical notion of smoothness on
Euclidean space, which is external to~$X$.  This is essential, because the
definition of a \emph{smooth structure} on a manifold precisely requires
closure of local coordinate functions under smooth compositions.
Without this axiom, one cannot obtain smooth transition maps between
charts, and therefore cannot recover the full $C^\infty$ atlas.  In the
context of differential spaces and $C^\infty$--rings, this axiom is known
to be minimal for capturing smooth structure.

The \emph{topological Compatibility} axiom ensures that the entropy--smooth functions detect exactly the
given topology of~$X$, and that no finer or coarser topology is forced by
the entropy data.  It guarantees that the smooth structure is compatible
with the underlying topological manifold structure and does not introduce
extraneous topological constraints.

Together, these axioms give a minimal set of conditions under which the
entropy coordinate charts will be shown to be equivalent to a classical smooth atlas.  Removing any one
of them breaks a fundamental part of the construction: removing locality
or continuity destroy the small-scale stability needed for coordinate construction,
removing coordinate non--degeneracy eliminates charts, removing entropy
regularity prevents smooth transition maps, and removing topological
compatibility leads to a structure unrelated to the given manifold.
Therefore the axioms are both natural and necessary.

\medskip

In summary, entropy--smoothness captures smooth structure on a manifold
without assuming any differentiability on~$X$ itself.  The only use of
classical smoothness occurs in Axiom~\ref{axiom:4} on regularity, where the
smoothness of functions $h:\mathbb{R}^k\to\mathbb{R}$ is invoked to
enforce the correct algebraic closure that characterizes $C^\infty$
structures. This reliance on Euclidean smoothness seems unavoidable and is shared by all alternative frameworks for abstract smooth structures, such as differential
spaces~\cite{sikorski1967differential}, diffeological spaces~\cite{IglesiasZemmour13,Souriau80},
$C^\infty$--rings~\cite{moerdijk2003models,Kock06,JoyceCinfRings},
convenient calculus~\cite{KrieglMichor97},
synthetic differential geometry~\cite{Kock06},
Fr{\"o}licher spaces,
and related approaches~\cite{Nestruev03}. All known abstract smooth-structure frameworks ultimately derive their notion of smoothness from smoothness on Euclidean spaces. Indeed, smoothness on $\mathbb{R}^n$ is taken as primary, and smoothness on arbitrary spaces is defined relative to it.

The family $\{\mu_{x,\varepsilon}\}$ is not additional structure beyond that available on every smooth manifold. We explain this through two correspondences.

\subsection{Correspondence with diffusion semigroups}
If $(P_t)_{t>0}$ is a diffusion semigroup on $X$, put $\mu_{x,\varepsilon} := P_\varepsilon(x,\cdot)$. Here $P_\varepsilon(x,\cdot)$ denotes the diffusion kernel at time $\varepsilon$, i.e.~a probability measure on $X$ depending on the starting point $x$. Then $\mu_{x,\varepsilon}$ describes the law of the diffusion started at $x$ after time $\varepsilon$. Our axioms express smoothness as the stability of relative entropy under infinitesimal perturbations of these transition probabilities. See \cite{bakry2014analysis} for foundational material.

\subsection{Correspondence with classical mollifiers}
On a classical smooth manifold, choose a coordinate chart and a smooth compactly supported mollifier kernel $\eta_\varepsilon$ \cite{lee2013smooth}. Push it forward to $X$ using the chart and glue globally using a partition of unity. The resulting measures $\mu_{x,\varepsilon}$ approximate $\delta_x$ as $\varepsilon\to 0$. Thus the entropy--smooth axioms correspond classically to smooth approximate identities.

\section{Examples}
We give a few examples here, to illustrate entropy-smooth structures on simple, well-known manifolds.

\subsection{The real line \(\mathbb{R}\).}
Fix \(x\in\mathbb{R}\). Let \(dy\) denote Lebesgue measure on \(\mathbb{R}\), and take the local
information probes to be the centred Gaussian measures
\[
  \mu_{x,\varepsilon}(dy)
  :=
  \frac{1}{\sqrt{2\pi\varepsilon}}
  \exp\!\Big(-\frac{(y-x)^2}{2\varepsilon}\Big)\,dy,
  \qquad \varepsilon>0.
\]
For a bounded continuous function \(f\), our increment normalization is
\(\delta_{x,\varepsilon}f(y)=(f(y)-f(x))/\sqrt\varepsilon\).  A Taylor expansion of the cumulant
\(\log Z_{x,\varepsilon}(t,f)\) gives
\[
I_{x,\varepsilon}(f)=\mathrm{Var}_{\mu_{x,\varepsilon}}\bigl(\delta_{x,\varepsilon} f\bigr)
=\frac{1}{\varepsilon}\int_{\mathbb{R}}(f(y)-f(x))^2\,\mu_{x,\varepsilon}(dy).
\]
If \(f\) is classically \(C^1\) near \(x\), then
\(f(y)-f(x)=f'(x)(y-x)+o(|y-x|)\), and since \(\mathrm{Var}(y-x)=\varepsilon\) under
\(\mu_{x,\varepsilon}\) we obtain
\[
I_x(f)=\lim_{\varepsilon\to0} I_{x,\varepsilon}(f)=\bigl(f'(x)\bigr)^2.
\]
In particular, the identity coordinate \(F(y)=y\) satisfies \(I_x(F)=1\) for all \(x\), so the
one--dimensional coordinate non--degeneracy axiom holds and the resulting
entropy coordinate charts recover the usual smooth structure on \(\mathbb{R}\).

\subsection{Euclidean space $\mathbb{R}^n$}
\label{example:Rn}
Fix $x\in\mathbb{R}^n$ and take the centred Gaussian probes
\[
\mu_{x,\varepsilon}(dy) := \frac{1}{(2\pi\varepsilon)^{n/2}} \exp\!\Big(-\frac{\|y - x\|^2}{2\varepsilon}\Big)\, dy, \qquad \varepsilon > 0.
\]
Let $f_i(y):=y_i$ be the standard coordinate functions. Then
\(\delta_{x,\varepsilon} f_i(y)=(y_i-x_i)/\sqrt\varepsilon\), and since
\(\mathrm{Cov}(y_i-x_i,y_j-x_j)=\varepsilon\,\delta_{ij}\) under \(\mu_{x,\varepsilon}\) we have
\[
I_{x,\varepsilon}(f_i,f_j)=\mathrm{Cov}_{\mu_{x,\varepsilon}}\bigl(\delta_{x,\varepsilon}f_i,\delta_{x,\varepsilon}f_j\bigr)=\delta_{ij},
\qquad
I_x(f_i,f_j)=\delta_{ij}.
\]
Thus the information Gram matrix is the identity, hence positive definite.
Moreover $F=(f_1,\dots,f_n)$ is a homeomorphism from any open ball onto an open subset of
$\mathbb{R}^n$. Therefore $(f_1,\dots,f_n)$ is an entropy coordinate chart in the sense of
Definition~\ref{definition:entropy_chart} (see below), and the entropy--smooth atlas agrees with
the classical smooth atlas on $\mathbb{R}^n$.

\subsection{The circle $S^1$.}
We omit details. In a local angular coordinate, the KL increment response gives the usual one--dimensional chart condition, so the entropy--smooth atlas agrees with the classical smooth structure on $S^1$.


\section{Structural Properties of Entropy--Smooth Manifolds}

In this section we record a number of basic structural results about
entropy--smooth manifolds. 
\subsection{Existence and Uniqueness}
Recall that a \emph{function sheaf} on a topological space $X$ is an assignment 
$U\mapsto \mathcal{F}(U)\subseteq \mathbb{R}^U$ for every open set 
$U\subseteq X$, such that $\mathcal{F}$ satisfies the usual sheaf 
axioms of locality and gluing \cite{moerdijk2003models}.

\begin{theorem}[Existence and uniqueness of entropy--smooth structures]
\label{thm:existence-uniqueness}
Let $X$ be a Hausdorff, second countable topological $n$--manifold.

\begin{enumerate}
\item[(a)] If $X$ carries a classical $C^\infty$--structure, then there exists
an entropy--smooth structure $\{\mu_{x,\varepsilon}\}$ on $X$ whose
entropy--smooth functions coincide with $C^\infty(X)$.

\item[(b)] Conversely, any entropy--smooth structure on $X$ induces a
unique classical smooth structure.  Two entropy--smooth structures induce
the same classical smooth structure if and only if their entropy--smooth
function sheaves coincide.
\end{enumerate}
\end{theorem}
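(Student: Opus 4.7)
The plan is to treat (a) and (b) separately, each by reduction to the Euclidean case already handled in Lemma~\ref{lem:entropy-equals-classical-Rn} and Theorem~\ref{thm:entropy-charts-smooth}.

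For part (a), I would construct the probes via the mollifier correspondence sketched after the axioms. Start with a locally finite cover of $X$ by classical smooth charts $(V_\alpha,\varphi_\alpha)$ with $\varphi_\alpha(V_\alpha)\subset\mathbb{R}^n$ open, and a subordinate smooth partition of unity $\{\chi_\alpha\}$. In each chart, define a Gaussian probe $\mu^{\alpha}_{x,\varepsilon}$ by pulling back the standard Euclidean Gaussian of variance $\varepsilon$ centred at $\varphi_\alpha(x)$ along $\varphi_\alpha^{-1}$ and truncating/cutting off its density smoothly inside $V_\alpha$. Glue these by setting $\mu_{x,\varepsilon}:=Z(x,\varepsilon)^{-1}\sum_\alpha \chi_\alpha(x)\,\mu^{\alpha}_{x,\varepsilon}$ with the obvious normalisation. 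Locality (Axiom~\ref{axiom:1}), weak continuity (Axiom~\ref{axiom:2}), and weak convergence to $\delta_x$ follow from the Gaussian case and standard partition-of-unity arguments. For Axioms~\ref{axiom:3}--\ref{axiom:5}, I would argue locally: in each chart $V_\alpha$, the Euclidean example (Section~\ref{example:Rn}) already produces an entropy coordinate chart with positive definite information Gram matrix and shows that $E(\varphi_\alpha(V_\alpha))\supset C^\infty$, while Lemma~\ref{lem:entropy-equals-classical-Rn} gives the reverse inclusion. Transporting through $\varphi_\alpha$ yields the coincidence $E(V_\alpha)=C^\infty(V_\alpha)$, from which closure under smooth composition, sums, products, and the topological compatibility axiom all follow globally.

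For part (b), start from an entropy--smooth structure $\{\mu_{x,\varepsilon}\}$. Axiom~\ref{axiom:3} produces, for every $x\in X$, an entropy coordinate chart $(U_x,F_x)$. Theorem~\ref{thm:entropy-charts-smooth} guarantees that between any two such charts the transition map $F_y\circ F_x^{-1}$ is a classical $C^\infty$--diffeomorphism of open subsets of $\mathbb{R}^n$. The collection $\{(U_x,F_x)\}$ is therefore a $C^\infty$--atlas on $X$, determining a unique maximal smooth atlas $\mathcal{A}$. To see that $\mathcal{A}$ depends only on the entropy--smooth function sheaf, observe that by Theorem~\ref{thm:entropy-charts-smooth} and Lemma~\ref{lem:entropy-equals-classical-Rn}, a function $f:U\to\mathbb{R}$ is entropy--smooth on $U\subset X$ if and only if $f\circ F_x^{-1}$ is $C^\infty$ on $F_x(U\cap U_x)$ for each $x$; that is, the entropy--smooth function sheaf equals the $C^\infty$--function sheaf of $\mathcal{A}$. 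Two smooth structures on the same topological manifold coincide iff their $C^\infty$--function sheaves coincide (a standard fact recoverable from Lemma~\ref{lem:uniqueness-smooth-structure-Rn} applied chart by chart), which yields both the uniqueness of the induced smooth structure and the biconditional in~(b).

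The step I expect to require the most care is the verification of Axiom~\ref{axiom:4} in (a) for the glued probes: the entropy response of $\mu_{x,\varepsilon}=Z^{-1}\sum_\alpha\chi_\alpha(x)\mu^{\alpha}_{x,\varepsilon}$ mixes contributions from several charts, so the identification of $I_x(f,g)$ with the Euclidean quadratic form in a single chart is not literally immediate. The cleanest fix is to note that for fixed $x$ and sufficiently small $\varepsilon$ only the $\alpha$ with $\chi_\alpha(x)>0$ contribute, and that the $\mu^{\alpha}_{x,\varepsilon}$ all converge to $\delta_x$ at the same Gaussian rate, so the small--scale limit $I_x(f,g)$ reduces to a convex combination of identical local Euclidean expressions and thus agrees with the answer computed in any single chart. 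Once this reduction is made, all remaining verifications are routine.
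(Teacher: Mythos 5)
Your proposal follows essentially the same route as the paper's proof: for (a), mollifier-type probes built chartwise, glued with a partition of unity, with Axioms~\ref{axiom:3}--\ref{axiom:5} verified by reduction to the Euclidean example and Lemma~\ref{lem:entropy-equals-classical-Rn}; for (b), the atlas of entropy coordinate charts together with Theorem~\ref{thm:entropy-charts-smooth} and the identification of the entropy--smooth sheaf with the $C^\infty$ sheaf. If anything, your explicit treatment of how the glued probe's quadratic response reduces to a convex combination of identical single-chart limits is more careful than the paper's corresponding gluing step, which it leaves as a remark.
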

\begin{proof}
(a) Assume $X$ carries a classical $C^\infty$–structure. Fix a countable smooth atlas
$\{(U_i,\phi_i)\}_{i\in I}$ with $\phi_i:U_i\to V_i\subset\mathbb{R}^n$ and a smooth partition of
unity $\{\chi_i\}_{i\in I}$ subordinate to $\{U_i\}$.

On $\mathbb{R}^n$ choose a standard mollifier $\rho\in C_c^\infty(\mathbb{R}^n)$, $\rho\ge 0$,
$\int_{\mathbb{R}^n}\rho(y)\,dy=1$, and set
\[
\rho_\varepsilon(u)
:=
\varepsilon^{-n}\rho\!\left(\frac{u}{\varepsilon}\right),
\qquad \varepsilon>0.
\]
For each chart $(U_i,\phi_i)$ and each $x\in U_i$ define a probability measure
$\mu^{(i)}_{x,\varepsilon}$ on $U_i$ by
\[
\mu^{(i)}_{x,\varepsilon}(A)
:=
\int_{\mathbb{R}^n}
\mathbf{1}_A\!\bigl(\phi_i^{-1}(\phi_i(x)+u)\bigr)\,\rho_\varepsilon(u)\,du,
\qquad A\subseteq U_i\ \text{Borel},
\]
where we interpret the integrand as zero whenever $\phi_i(x)+u\notin V_i$. For
$\varepsilon>0$ sufficiently small (depending on $x$), the support of $\mu^{(i)}_{x,\varepsilon}$
is contained in $U_i$ and shrinks to $\{x\}$ as $\varepsilon\to 0$, and
$\mu^{(i)}_{x,\varepsilon}\to\delta_x$ weakly.

We now patch these local kernels using the partition of unity. For each $x\in X$ and
$\varepsilon>0$ define
\[
\widetilde\mu_{x,\varepsilon}
:=
\sum_{i\in I}\chi_i(x)\,\mu^{(i)}_{x,\varepsilon},
\]
and finally set
\[
\mu_{x,\varepsilon}
:=
\frac{1}{\widetilde\mu_{x,\varepsilon}(X)}\,\widetilde\mu_{x,\varepsilon}.
\]
For $\varepsilon$ small, only finitely many indices $i$ with $x\in U_i$ contribute, and
$Z_{x,\varepsilon}\to 1$ as $\varepsilon\to 0$ by construction, so the normalisation is
well–defined and $\mu_{x,\varepsilon}$ is a Borel probability measure supported in a small
neighbourhood of $x$.

It is straightforward to check that $(x,\varepsilon)\mapsto\mu_{x,\varepsilon}$ satisfies
the locality and continuity axioms: locality follows from the shrinking support of
$\rho_\varepsilon$ in each chart and the compactness of the supports of the $\chi_i$, while
continuity in $(x,\varepsilon)$ follows from the smooth dependence of $\phi_i$ and $\chi_i$
together with dominated convergence.

Fix an entropy–smooth candidate function $f\in C^\infty(X)$. In a chart $(U_i,\phi_i)$ the
measures $\mu_{x,\varepsilon}$ coincide, up to a smooth weight, with the Euclidean mollifier
measures considered in the $\mathbb{R}^n$ example of Section~\ref{example:Rn}. There we computed explicitly
that the quadratic entropy response $I_x(f)$ exists and is finite for all $x$, and that for
coordinate functions the resulting information Gram matrix is positive definite. Thus Axioms
\ref{axiom:3}--\ref{axiom:5} hold locally in each chart. Using the partition of unity
$\{\chi_i\}$ and the closure of entropy–smooth functions under smooth composition and algebraic
operations, these properties glue to give a global entropy–smooth structure $\{\mu_{x,\varepsilon}\}$
on $X$.

Moreover, in each chart $(U_i,\phi_i)$ the entropy–smooth functions for the induced probes
correspond exactly to the classical $C^\infty(U_i)$ functions by the Euclidean calculation and
Lemma~3.4. Gluing again via the partition of unity shows that the global entropy–smooth
function sheaf coincides with $C^\infty(X)$. This proves (a).

\medskip\noindent
(b) For the converse direction, starting from an entropy–smooth structure on $X$, Lemma~\ref{lem:entropy-equals-classical-Rn}
and Theorem~\ref{thm:entropy-charts-smooth} show that the associated entropy coordinate charts induce a classical smooth
atlas and that the entropy–smooth functions on $X$ coincide with the usual smooth functions
for this atlas. If two entropy–smooth structures have the same entropy–smooth function sheaf,
then they induce the same maximal smooth atlas, hence the same classical smooth structure.
Conversely, if the induced classical smooth structures agree, then the entropy–smooth function
sheaves coincide chartwise and hence globally. This proves (b) and completes the proof.
\end{proof}

\subsection{Entropy–smooth maps}

Let $(X,\{\mu_{x,\varepsilon}\})$ and $(Y,\{\nu_{y,\varepsilon}\})$ be entropy–smooth manifolds, and let 
$E(X)$ and $E(Y)$ denote their sheaves of entropy–smooth functions $X\longrightarrow \mathbb{R}$ and $Y\longrightarrow \mathbb{R}$, respectively.

\begin{definition}[Entropy--smooth map]
A continuous map $F:X\to Y$ is called \emph{entropy--smooth} if for every entropy--smooth 
function $g\in E(Y)$, the pullback 
\[
F^*g := g\circ F
\]
belongs to $E(X)$. In other words, $F$ is entropy--smooth if and only if
\[
g\circ F \;\text{is entropy--smooth on }X
\qquad\text{for all } g\in E(Y).
\]
\end{definition}

\noindent
We write $\mathrm{Hom}_{\mathrm{Ent}}(X,Y)$ for the class of entropy--smooth maps 
between entropy--smooth manifolds.

\subsection{Categorical Equivalence}

\begin{theorem}[Category equivalence]
\label{thm:category-equivalence}
Let $\mathbf{EntMan}$ be the category of entropy--smooth manifolds with
entropy--smooth maps, and let $\mathbf{Smooth}$ be the category of
classical smooth manifolds with smooth maps.  Then $\mathbf{EntMan}$ and
$\mathbf{Smooth}$ are equivalent categories.
\end{theorem}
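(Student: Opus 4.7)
The plan is to exhibit two functors $\Phi:\mathbf{EntMan}\to\mathbf{Smooth}$ and $\Psi:\mathbf{Smooth}\to\mathbf{EntMan}$ and check that they are quasi-inverse. On objects, $\Phi$ sends $(X,\{\mu_{x,\varepsilon}\})$ to the underlying topological manifold equipped with the classical $C^\infty$--atlas produced from the entropy coordinate charts via Theorem~\ref{thm:entropy-charts-smooth} and Theorem~\ref{thm:existence-uniqueness}(b). The functor $\Psi$ sends a classical smooth manifold $M$ to $M$ together with the mollifier-based probe family $\{\mu_{x,\varepsilon}\}$ constructed in the proof of Theorem~\ref{thm:existence-uniqueness}(a). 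This construction involves choices (an atlas and partition of unity), so part of the work is to observe that different choices yield the same entropy--smooth function sheaf $C^\infty(M)$ and hence isomorphic objects of $\mathbf{EntMan}$; morphisms in $\mathbf{EntMan}$ depend only on the function sheaf, so the choice is harmless.

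On morphisms, both functors act as the identity on the underlying continuous maps. To see this is well-defined, one has to verify the crucial claim: for entropy--smooth manifolds $X,Y$, a continuous map $F:X\to Y$ is entropy--smooth in the sense of the definition preceding the theorem if and only if it is classically smooth with respect to the induced $C^\infty$--structures. Functoriality (preservation of identities and composition) is then immediate since composition of continuous maps is unambiguous.

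The main obstacle, and the central step of the proof, is establishing this morphism-level bijection. The argument is local: for any $x\in X$ choose entropy coordinate charts $(U,F_U)$ around $x$ and $(V,G_V)$ around $F(x)$ with $F(U)\subset V$. By Theorem~\ref{thm:entropy-charts-smooth}, both $F_U$ and $G_V$ are $C^\infty$--diffeomorphisms onto open subsets $\widetilde U,\widetilde V$ of Euclidean space. Writing $\widetilde F := G_V\circ F\circ F_U^{-1}:\widetilde U\to\widetilde V$, the definition of entropy--smooth map together with the pushforward identity established in the proof of Theorem~\ref{thm:entropy-charts-smooth} shows that $F$ is entropy--smooth near $x$ iff each component $g_j\circ F$ lies in $E(U)$ iff each component of $\widetilde F$ lies in $E(\widetilde U)$. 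By Lemma~\ref{lem:entropy-equals-classical-Rn}, this is equivalent to each component being classically $C^\infty$, i.e.\ to $\widetilde F\in C^\infty(\widetilde U,\widetilde V)$ via Lemma~\ref{lem:elementary-smoothness}. Thus entropy--smoothness and classical smoothness of $F$ coincide.

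It remains to produce natural isomorphisms $\Phi\circ\Psi\cong\mathrm{id}_{\mathbf{Smooth}}$ and $\Psi\circ\Phi\cong\mathrm{id}_{\mathbf{EntMan}}$. For a smooth manifold $M$, Theorem~\ref{thm:existence-uniqueness}(a) gives $E(\Psi(M))=C^\infty(M)$, so $\Phi(\Psi(M))=M$ as smooth manifolds; the identity map supplies the component of the natural isomorphism. For an entropy--smooth $X$, the probe families of $X$ and $\Psi(\Phi(X))$ may differ, but by Theorem~\ref{thm:existence-uniqueness}(b) their entropy--smooth function sheaves agree, and since entropy--smooth maps are determined by pullback of entropy--smooth functions, the identity continuous map is an entropy--smooth isomorphism $X\to\Psi(\Phi(X))$ with entropy--smooth inverse. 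Naturality of both families in the respective morphisms reduces to the observation that $\Phi$ and $\Psi$ do not alter the underlying continuous maps, so the naturality squares commute on the nose.
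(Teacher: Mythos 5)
Your proposal is correct and follows essentially the same route as the paper: the same pair of functors (entropy structure $\mapsto$ induced atlas via Theorem~\ref{thm:existence-uniqueness}(b), and smooth manifold $\mapsto$ mollifier probes via Theorem~\ref{thm:existence-uniqueness}(a)), identity action on underlying maps justified by $E(X)=C^\infty(X)$, and identity maps as the components of the natural isomorphisms. Your treatment is in fact slightly more careful than the paper's at two points — the explicit local-chart argument that entropy--smooth maps coincide with classically smooth maps, and the remark that the choices in constructing $\Psi$ are harmless because morphisms depend only on the function sheaf — but these are refinements of the same argument, not a different one.
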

\begin{proof}
We denote by $E(X)$ the sheaf of entropy--smooth functions on an entropy--smooth
manifold $X$.

By Theorem~\ref{thm:existence-uniqueness}, every entropy--smooth manifold $(X,\{\mu_{x,\varepsilon}\})$ induces a
classical smooth structure on the underlying topological manifold $X$, and conversely
every classical smooth structure admits at least one entropy--smooth structure whose
entropy--smooth functions coincide with $C^\infty(X)$.

Define a functor
\[
F : \mathrm{EntMan} \longrightarrow \mathrm{Smooth}
\]
as follows. On objects, we define
\[
F(X,\{\mu_{x,\varepsilon}\}) := (X,\mathcal{A}_X),
\]
where $\mathcal{A}_X$ is the classical smooth atlas induced by the entropy--smooth
structure as in Theorem~\ref{thm:existence-uniqueness}(b). On morphisms, if
\[
\Phi:(X,\{\mu_{x,\varepsilon}\}) \longrightarrow (Y,\{\nu_{y,\varepsilon}\})
\]
is an entropy--smooth map, then by definition $\Phi$ is continuous and for every
$g\in E(Y)$ one has $g\circ \Phi \in E(X)$. By Theorem~\ref{thm:existence-uniqueness}, for the induced smooth
structures we have
\[
E(X) = C^\infty(X), \qquad E(Y) = C^\infty(Y),
\]
so $\Phi^*(C^\infty(Y)) \subset C^\infty(X)$ and hence $\Phi$ is a classical smooth map
between $(X,\mathcal{A}_X)$ and $(Y,\mathcal{A}_Y)$. We therefore set
\[
F(\Phi) := \Phi
\]
as a smooth morphism in the category $\mathrm{Smooth}$. It is immediate that $F$
preserves identity morphisms and composition, so $F$ is a well--defined functor.

Conversely, define a functor
\[
G : \mathrm{Smooth} \longrightarrow \mathrm{EntMan}
\]
as follows. On objects, let $(M,\mathcal{A}_M)$ be a classical smooth manifold.
By Theorem~\ref{thm:existence-uniqueness}(a) there exists an entropy--smooth structure
$\{\mu^{M}_{x,\varepsilon}\}$ on $M$ such that
\[
E(M) = C^\infty(M).
\]
We fix one such choice of probes and set
\[
G(M,\mathcal{A}_M) := (M,\{\mu^{M}_{x,\varepsilon}\}).
\]
On morphisms, if $\psi:(M,\mathcal{A}_M)\to(N,\mathcal{A}_N)$ is a smooth map, then
for every $h\in C^\infty(N)$ one has $h\circ\psi\in C^\infty(M)$. Using again
$E(M)=C^\infty(M)$ and $E(N)=C^\infty(N)$ for the chosen entropy structures, this
means $\psi^*(E(N))\subset E(M)$, so $\psi$ is entropy--smooth as a map between
$G(M)$ and $G(N)$. We therefore set
\[
G(\psi) := \psi
\]
as a morphism in $\mathrm{EntMan}$. Again, identities and composition are preserved,
so $G$ is a functor.

Let $(M,\mathcal{A}_M)$ be a smooth manifold. By construction,
\[
G(M,\mathcal{A}_M) = (M,\{\mu^{M}_{x,\varepsilon}\})
\]
is an entropy--smooth manifold with $E(M)=C^\infty(M)$. Applying $F$ we obtain
\[
F(G(M,\mathcal{A}_M)) = (M,\mathcal{A}'_M),
\]
where $\mathcal{A}'_M$ is the smooth atlas induced by the entropy--smooth structure
$\{\mu^{M}_{x,\varepsilon}\}$. Since $E(M)=C^\infty(M)$, Theorem~\ref{thm:existence-uniqueness}(b) implies that
$\mathcal{A}'_M$ coincides with the given smooth structure $\mathcal{A}_M$. Thus
\[
F\circ G(M,\mathcal{A}_M) = (M,\mathcal{A}_M)
\]
on objects, and on morphisms $F\circ G$ acts as the identity because both $F$ and $G$
leave the underlying maps unchanged. Hence $F\circ G = \mathrm{Id}_{\mathrm{Smooth}}$
strictly (in fact, not just up to isomorphism).

Let $(X,\{\mu_{x,\varepsilon}\})$ be an entropy--smooth manifold, and let
\[
F(X,\{\mu_{x,\varepsilon}\}) = (X,\mathcal{A}_X)
\]
be the induced smooth manifold. Applying $G$ to $(X,\mathcal{A}_X)$ yields an
entropy--smooth structure $\{\mu^{\mathrm{can}}_{x,\varepsilon}\}$ on $X$ such that
\[
E_{\mathrm{can}}(X) = C^\infty(X,\mathcal{A}_X),
\]
and
\[
G(F(X,\{\mu_{x,\varepsilon}\})) = (X,\{\mu^{\mathrm{can}}_{x,\varepsilon}\}).
\]

On the other hand, by the equivalence with classical smooth structures, the original entropy--smooth structure $\{\mu_{x,\varepsilon}\}$ on $X$
also has the property that its entropy--smooth functions coincide with the smooth
functions for the induced atlas $\mathcal{A}_X$, i.e.
\[
E(X) = C^\infty(X,\mathcal{A}_X).
\]
Therefore $E(X) = E_{\mathrm{can}}(X)$ as sheaves of functions on $X$. By
Theorem~\ref{thm:existence-uniqueness}(b), two entropy--smooth structures on the same underlying topological
manifold induce the same smooth structure if and only if their entropy--smooth sheaves
coincide. Applying this with $\{\mu_{x,\varepsilon}\}$ and $\{\mu^{\mathrm{can}}_{x,\varepsilon}\}$
shows that these two entropy structures are isomorphic, and in fact the identity map
\[
\eta_X := \mathrm{id}_X : (X,\{\mu_{x,\varepsilon}\}) \longrightarrow
(X,\{\mu^{\mathrm{can}}_{x,\varepsilon}\})
\]
is an isomorphism in $\mathrm{EntMan}$, since pullback by $\mathrm{id}_X$ identifies
$E(X)$ and $E_{\mathrm{can}}(X)$.

Thus for each object $(X,\{\mu_{x,\varepsilon}\})$ in $\mathrm{EntMan}$ we have an
isomorphism
\[
\eta_X : (X,\{\mu_{x,\varepsilon}\}) \xrightarrow{\;\cong\;}
G(F(X,\{\mu_{x,\varepsilon}\})).
\]
On morphisms, both $F$ and $G$ act as the identity on the underlying maps, so the family
$\{\eta_X\}$ is natural in $X$, and we obtain a natural isomorphism
\[
\eta : \mathrm{Id}_{\mathrm{EntMan}} \xrightarrow{\;\cong\;} G\circ F.
\]

We have constructed functors
\[
F : \mathrm{EntMan} \longrightarrow \mathrm{Smooth},
\qquad
G : \mathrm{Smooth} \longrightarrow \mathrm{EntMan},
\]
such that $F\circ G = \mathrm{Id}_{\mathrm{Smooth}}$ and $G\circ F$ is naturally
isomorphic to $\mathrm{Id}_{\mathrm{EntMan}}$. Hence $\mathrm{EntMan}$ and
$\mathrm{Smooth}$ are equivalent categories.
\end{proof}

\subsection{Products}

\begin{theorem}[Products of entropy--smooth manifolds]
\label{prop:products}
Let $(X,\{\mu^X_{x,\varepsilon}\})$ and $(Y,\{\mu^Y_{y,\varepsilon}\})$ be
entropy--smooth manifolds of dimensions $n$ and $m$, respectively.  Define
\[
\mu^{X\times Y}_{(x,y),\varepsilon}
= \mu^X_{x,\varepsilon} \otimes \mu^Y_{y,\varepsilon}.
\]
Then $X\times Y$ is an entropy--smooth manifold inducing the standard
product smooth structure.  Moreover, the entropy-smooth function sheaf is 
\[
 E(X\times Y)
=
\bigl\{
h(f_1,\dots,f_k,g_1,\dots,g_\ell)
:\ f_i\in  E(X),\ g_j\in E(Y),\
h\in C^\infty(\mathbb{R}^{k+\ell})
\bigr\}.
\]
\end{theorem}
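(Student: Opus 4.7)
The plan is to verify the five axioms of entropy--smoothness for the product probes $\mu^{X\times Y}_{(x,y),\varepsilon}:=\mu^X_{x,\varepsilon}\otimes\mu^Y_{y,\varepsilon}$ and then invoke Theorems~\ref{thm:existence-uniqueness} and~\ref{thm:entropy-charts-smooth} to identify the induced classical smooth structure with the standard product structure and to read off the function sheaf. Locality (Axiom~\ref{axiom:1}) is immediate from $\operatorname{supp}(\mu^X_{x,\varepsilon}\otimes\mu^Y_{y,\varepsilon})\subset U_x\times U_y$, and weak convergence $\mu^X_{x,\varepsilon}\otimes\mu^Y_{y,\varepsilon}\to\delta_{(x,y)}$ follows from the two factor convergences by testing against bounded continuous product functions. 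Weak continuity of $((x,y),\varepsilon)\mapsto \mu^X_{x,\varepsilon}\otimes\mu^Y_{y,\varepsilon}$ (Axiom~\ref{axiom:2}) is inherited from the two factors in the same way.

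The heart of the proof is a Fubini computation for the entropy response. For a pullback $\widetilde f=\pi_X^* f$ with $f$ bounded continuous on $X$, the Radon--Nikodym derivative $1+t\widetilde f$ depends only on the first coordinate, and so
\[
Ent^{X\times Y}_{(x,y),\varepsilon}(t,\widetilde f)
=\int(1+tf)\log(1+tf)\,d(\mu^X_{x,\varepsilon}\otimes\mu^Y_{y,\varepsilon})
=Ent^X_{x,\varepsilon}(t,f).
\]
Passing to the second difference in $t$ and then to $\varepsilon\to 0$ yields $I^{X\times Y}_{(x,y)}(\widetilde f)=I^X_x(f)$, so $\widetilde f\in E(X\times Y)$ whenever $f\in E(X)$; the analogous statement holds for pullbacks from $Y$. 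A parallel Taylor expansion of the asymmetric divergence $D((1+t\pi_X^* f+s\pi_Y^* g)(\mu^X\otimes\mu^Y)\,\|\,\mu^X\otimes\mu^Y)$ identifies the mixed $ts$--term as $\int fg\,d(\mu^X\otimes\mu^Y)$, which Fubini factors into $\int f\,d\mu^X_{x,\varepsilon}\cdot\int g\,d\mu^Y_{y,\varepsilon}\to f(x)g(y)$, giving $I^{X\times Y}_{(x,y)}(\pi_X^* f,\pi_Y^* g)=f(x)g(y)$. The analogous same--factor computation gives $I^{X\times Y}_{(x,y)}(\pi_X^* f,\pi_X^* g)=I^X_x(f,g)$.

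For coordinate non--degeneracy (Axiom~\ref{axiom:3}), take entropy coordinate charts $F=(f_1,\dots,f_n)$ on $U\ni x_0$ and $G=(g_1,\dots,g_m)$ on $V\ni y_0$ centered at the base points and form
\[
H=(\pi_X^* f_1,\dots,\pi_X^* f_n,\pi_Y^* g_1,\dots,\pi_Y^* g_m) : U\times V\longrightarrow\mathbb{R}^{n+m},
\]
which is a homeomorphism onto $F(U)\times G(V)$. At $(x_0,y_0)$, centering makes the cross--terms $f_i(x_0)g_j(y_0)$ vanish, so the Gram matrix is block diagonal with blocks $G_{x_0}(F)$ and $G_{y_0}(G)$, hence positive definite; shrinking $U\times V$ using continuity of the $f_i,g_j$ and of the Gram entries (Axiom~\ref{axiom:4}(1)) keeps the rank--one cross block $f(x)g(y)^T$ a small perturbation and preserves positive definiteness throughout. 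Axioms~\ref{axiom:4}(2)--(3) for the product follow from the pullback identities above together with the corresponding axioms on each factor, applied to smooth compositions $h(\pi_X^* f_1,\dots,\pi_Y^* g_\ell)$. Axiom~\ref{axiom:5} holds because the product topology on $X\times Y$ is generated by pullbacks of entropy--smooth functions from the two factors.

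With the axioms in hand, Theorem~\ref{thm:existence-uniqueness}(b) produces a classical smooth structure on $X\times Y$, and since $H$ is, up to coordinate reordering, the factor chart $F\times G$, this coincides with the standard product smooth structure. For the function sheaf identity, the inclusion $\supseteq$ is immediate from pullback closure together with Axiom~\ref{axiom:4}(2). For $\subseteq$, apply Theorem~\ref{thm:entropy-charts-smooth} to the product chart $H$: every $\varphi\in E(U\times V)$ equals $\widetilde\varphi\circ H$ for some $\widetilde\varphi\in C^\infty(H(U\times V))$, which is precisely a smooth function of the $\pi_X^* f_i$ and $\pi_Y^* g_j$. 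The main technical obstacles I anticipate are (i) verifying the uniformity clause built into the joint entropy coefficient definition for the product, which should reduce by dominated convergence on the bounded perturbation $(tf+sg)$ to the corresponding uniformity on each factor, and (ii) the positive--definiteness bookkeeping away from the centered point, where the rank--one cross block $f(x)g(y)^T$ must be dominated by the block diagonal via a Schur--complement argument on a sufficiently small product neighborhood.
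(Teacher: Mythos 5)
Your proposal is correct and follows essentially the same route as the paper: axiom-by-axiom verification for the tensor-product probes, a Fubini factorization showing pullback entropy responses reduce to the factors and the mixed coefficient $I_{(x,y)}(\pi_X^*f,\pi_Y^*g)$ equals $f(x)g(y)$ (hence vanishes after centering), a block-diagonal Gram matrix for the product chart, and the sheaf identity via smooth composition. If anything, you are slightly more careful than the paper at two points it glosses over: you correctly identify the mixed term at finite $\varepsilon$ as $\int f\,d\mu^X_{x,\varepsilon}\cdot\int g\,d\mu^Y_{y,\varepsilon}$ (which only tends to $f(x)g(y)$ as $\varepsilon\to 0$, rather than vanishing exactly), and you address positive definiteness of the Gram matrix at points of the chart domain other than the center, which Axiom~\ref{axiom:3} requires but the paper checks only at the base point.
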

\begin{proof}
Let $(X,\{\mu^{X}_{x,\varepsilon}\})$ and $(Y,\{\mu^{Y}_{y,\varepsilon}\})$ be entropy–smooth manifolds.
Define for $(x,y)\in X\times Y$
\[
\mu^{X\times Y}_{(x,y),\varepsilon}
:= \mu^{X}_{x,\varepsilon}\otimes \mu^{Y}_{y,\varepsilon}.
\]
We verify Axioms~1–5 for $(X\times Y,\{\mu^{X\times Y}_{(x,y),\varepsilon}\})$.

\emph{Axiom 1 (Locality)}.
Let $(x,y)\in X\times Y$. By locality in $X$ and $Y$ (Axiom~\ref{axiom:1} in both factors),
there exist neighbourhoods $U_x\subset X$, $V_y\subset Y$ and $\varepsilon_X,\varepsilon_Y>0$
such that
\[
\mathrm{supp}(\mu^{X}_{x,\varepsilon})\subset U_x,\qquad
\mathrm{supp}(\mu^{Y}_{y,\varepsilon})\subset V_y
\]
for all sufficiently small $\varepsilon$. Hence
\[
\mathrm{supp}(\mu^{X\times Y}_{(x,y),\varepsilon})
\subset U_x\times V_y,
\]
so locality holds.

Moreover, $\mu^{X}_{x,\varepsilon}\to\delta_x$ and $\mu^{Y}_{y,\varepsilon}\to\delta_y$
as $\varepsilon\to0$, hence
\[
\mu^{X\times Y}_{(x,y),\varepsilon}
\;=\;\mu^{X}_{x,\varepsilon}\otimes\mu^{Y}_{y,\varepsilon}
\;\longrightarrow\;
\delta_x\otimes\delta_y=\delta_{(x,y)}
\]
weakly. Thus Axiom~\ref{axiom:1} is satisfied.

\medskip

\emph{Axiom 2 (Continuity of probes).}
Weak continuity of
$(x,\varepsilon)\mapsto\mu^{X}_{x,\varepsilon}$ and $(y,\varepsilon)\mapsto\mu^{Y}_{y,\varepsilon}$
implies continuity of the product map
\[
((x,y),\varepsilon) \longmapsto 
\mu^{X}_{x,\varepsilon}\otimes\mu^{Y}_{y,\varepsilon}
\]
in the weak topology, because weak convergence is preserved under tensor products of measures.
Thus Axiom~\ref{axiom:2} holds.

\medskip

\emph{Axiom 3 (Coordinate non–degeneracy).}
Fix $(x,y)\in X\times Y$. By Axiom~\ref{axiom:3} for $X$, there exist entropy–smooth functions
$f_1,\dots,f_n$ near $x$ such that
\[
F_X=(f_1,\dots,f_n):U_x\to\mathbb{R}^n
\]
is an entropy coordinate chart, with information Gram matrix
$G^X_{x}(F_X)$ positive definite.
Similarly, choose entropy coordinate chart
$F_Y=(g_1,\dots,g_m):V_y\to\mathbb{R}^m$ for $Y$ with
$G^{Y}_y(F_Y)$ positive definite.

Define
\[
F_{X\times Y} := (f_1\circ\pi_X,\dots,f_n\circ\pi_X,\,
                 g_1\circ\pi_Y,\dots,g_m\circ\pi_Y)
: U_x\times V_y \longrightarrow \mathbb{R}^{n+m},
\]
where $\pi_X,\pi_Y$ are the projections.

The information Gram matrix of $F_{X\times Y}$ at $(x,y)$ is block diagonal:
\[
G^{X\times Y}_{(x,y)}(F_{X\times Y})
=
\begin{pmatrix}
G^X_x(F_X) & 0 \\
0 & G^Y_y(F_Y)
\end{pmatrix},
\]
because $\mu^{X\times Y}_{(x,y),\varepsilon}$
factors as a product measure and the mixed entropy coefficients vanish:
\[
I_{(x,y)}(f_i\circ\pi_X,\; g_j\circ\pi_Y)=0.
\]
Since each block is positive definite, the full matrix is positive definite.

Finally, $F_X$ and $F_Y$ being homeomorphisms implies
\[
F_X\otimes F_Y : U_x\times V_y \longrightarrow F_X(U_x)\times F_Y(V_y)
\]
is a homeomorphism onto an open set. Thus $F_{X\times Y}$ is an entropy coordinate chart.

\medskip

\emph{Axiom 4 (Entropy Regularity).}
Let $h\in C^\infty(\mathbb{R}^{k+\ell})$ and
let $f_1,\dots,f_k\in E(X)$ and $g_1,\dots,g_\ell\in E(Y)$.
Define functions on $X\times Y$ by
\[
F_i(x,y)=f_i(x), \qquad G_j(x,y)=g_j(y).
\]
Quadratic entropy response with respect to
$\mu^{X\times Y}_{(x,y),\varepsilon}
=\mu^X_{x,\varepsilon}\otimes\mu^{Y}_{y,\varepsilon}$
splits additively in the two factors:
\[
I_{(x,y)}(F_i,F_{i'})
= I_x(f_i,f_{i'}),\qquad
I_{(x,y)}(G_j,G_{j'})
= I_y(g_j,g_{j'}),
\]
and mixed terms vanish:
\[
I_{(x,y)}(F_i,G_j)=0.
\]
Indeed, by Definition~2.8 we have
\[
I_{(x,y),\varepsilon}(h_1,h_2)=\mathrm{Cov}_{\mu^X_{x,\varepsilon}\otimes\mu^Y_{y,\varepsilon}}
\bigl(\delta_{(x,y),\varepsilon} h_1,\,\delta_{(x,y),\varepsilon} h_2\bigr).
\]
If $h_1=F_i=f_i\circ\pi_X$ and $h_2=G_j=g_j\circ\pi_Y$, then
$\delta_{(x,y),\varepsilon}h_1$ depends only on the $X$--variable and
$\delta_{(x,y),\varepsilon}h_2$ depends only on the $Y$--variable. Since the probe factors as a product measure, these two random variables are independent under
$\mu^X_{x,\varepsilon}\otimes\mu^Y_{y,\varepsilon}$, and hence their covariance vanishes:
\[
I_{(x,y),\varepsilon}(F_i,G_j)=0.
\]
The remaining identities for the pure $X$-- and $Y$--terms follow immediately from the same definition, because restriction to one factor simply reduces the covariance to the corresponding covariance on that factor.

Thus mixed entropy coefficients vanish, and the remaining identities for the pure $X$-- and $Y$--terms follow immediately from the same formula.

Thus the tuple $(F_1,\dots,F_k,G_1,\dots,G_\ell)$
obeys the same algebraic relations as in the factor manifolds.
By Axiom~\ref{axiom:4} in $X$ and $Y$, the composition
\[
h(F_1,\dots,F_k,G_1,\dots,G_\ell)
\]
is entropy–smooth on $X\times Y$. Closure under sums and products is inherited from each factor.
Thus Axiom~\ref{axiom:4} holds.

\medskip

\emph{Axiom 5 (Topological compatibility).}
Since $E(X)$ and $E(Y)$ determine the topologies of $X$ and $Y$,
the functions
\[
(x,y)\mapsto f_i(x),\qquad (x,y)\mapsto g_j(y)
\]
separate points in $X\times Y$ and generate exactly the product topology.
Hence the entropy–smooth functions on $X\times Y$ induce the product topology.
Thus Axiom~\ref{axiom:5} is verified.

\medskip

\emph{Entropy–smooth function sheaf.}
Let $E(X)$ and $E(Y)$ denote the entropy–smooth functions on each manifold.
By the computations above, the class of entropy–smooth functions on $X\times Y$ is exactly
\[
E(X\times Y)=
\big\{
h(f_1,\dots,f_k,g_1,\dots,g_\ell)
:\ f_i\in E(X),\ g_j\in E(Y),\ h\in C^\infty(\mathbb{R}^{k+\ell})
\big\},
\]
which is the standard description of the smooth function sheaf on the product manifold.

Now all axioms of entropy–smoothness are satisfied, and the induced atlas is exactly the classical product smooth structure. This completes the proof.
\end{proof}

\subsection{Submanifolds}

\begin{theorem}[Entropy--smooth submanifolds]
\label{thm:submanifold}
Let $(X,\{\mu^X_{x,\varepsilon}\})$ be an entropy--smooth manifold and
$N\subset X$ a classical embedded submanifold for the induced smooth 
structure. Then there exists an entropy--smooth structure
$\{\mu^N_{z,\varepsilon}\}$ on $N$ such that:

\begin{enumerate}
\item $i:N\hookrightarrow X$ is entropy--smooth;
\item the entropy--smooth structure on $N$ induces the usual
submanifold smooth structure.
\end{enumerate}
\end{theorem}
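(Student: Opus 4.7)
The strategy is to bootstrap from the existence direction of Theorem~\ref{thm:existence-uniqueness}: rather than constructing probes on $N$ directly from the ambient probes $\mu^{X}_{x,\varepsilon}$, I would first invoke the classical smooth structure that $N$ inherits as an embedded submanifold, apply Theorem~\ref{thm:existence-uniqueness}(a) to $N$ itself, and finally check compatibility with $X$ through the sheaf identifications $E(X)=C^{\infty}(X)$ and $E(N)=C^{\infty}(N)$.

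First I would observe that, by Theorem~\ref{thm:existence-uniqueness}(b), the ambient entropy--smooth structure $\{\mu^{X}_{x,\varepsilon}\}$ determines a unique classical $C^{\infty}$--atlas on $X$ with $E(X)=C^{\infty}(X)$. Since $N$ is a classical embedded submanifold of dimension $k$, one may cover $N$ by countably many slice charts $(U_\alpha,\phi_\alpha)$ of $X$ adapted to $N$, so that $\phi_\alpha(U_\alpha\cap N)=V_\alpha\cap(\mathbb{R}^{k}\times\{0\})$. The restricted charts $\phi_\alpha|_{U_\alpha\cap N}$ assemble into the canonical submanifold smooth atlas on $N$.

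Next I would construct $\{\mu^{N}_{z,\varepsilon}\}$ by applying the recipe from the proof of Theorem~\ref{thm:existence-uniqueness}(a) directly to this submanifold atlas: take a standard Euclidean mollifier $\rho_\varepsilon$ on $\mathbb{R}^{k}$, pull it back through each slice chart to give a probability measure supported in $U_\alpha\cap N$, and patch using a smooth partition of unity $\{\chi_\alpha\}$ on $N$, normalising at the end. Theorem~\ref{thm:existence-uniqueness}(a) applied to $N$ then guarantees that the resulting family satisfies Axioms~\ref{axiom:1}--\ref{axiom:5} and that $E(N)=C^{\infty}(N)$, while Theorem~\ref{thm:existence-uniqueness}(b) ensures that the induced classical smooth structure on $N$ is precisely the submanifold structure, which is assertion~(2). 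For assertion~(1), let $g\in E(X)=C^{\infty}(X)$; since $N$ is an embedded submanifold, $g\circ i=g|_{N}$ is classically smooth on $N$, hence lies in $C^{\infty}(N)=E(N)$, so $i$ is entropy--smooth by definition.

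The only real subtlety is conceptual rather than technical: this route decouples the probes on $N$ from those on $X$, whereas one might hope to produce $\mu^{N}_{z,\varepsilon}$ intrinsically from $\mu^{X}_{z,\varepsilon}$, for example by conditioning on a tubular neighborhood of $N$ or restricting its density. The theorem only requires existence of \emph{some} compatible entropy--smooth structure on $N$, so such a direct construction is unnecessary; and it would be delicate, because $\mu^{X}_{z,\varepsilon}$ need not be absolutely continuous with respect to any reference measure naturally restrictable to $N$, forcing a re--verification of all five axioms from scratch. The bootstrap via Theorem~\ref{thm:existence-uniqueness} sidesteps these difficulties entirely and reduces the main argument to the elementary fact that restrictions of smooth functions to embedded submanifolds remain smooth.
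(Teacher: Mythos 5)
Your proposal is correct for the statement as written, but it takes a genuinely different route from the paper. The paper constructs the probes on $N$ \emph{from the ambient probes}: in a slice chart it conditions $\mu^{X}_{s,\varepsilon}$ on $S=N$, i.e.\ sets $\mu^{S}_{s,\varepsilon}=c_{s,\varepsilon}\,\mathbf{1}_{S}\,\mu^{X}_{s,\varepsilon}$, and then re-verifies Axioms~\ref{axiom:1}--\ref{axiom:5} by hand, the key computation being that the entropy functionals of the conditioned probes agree with the ambient ones to second order in $t$, so the submanifold Gram matrix is the leading $k\times k$ principal submatrix of the ambient Gram matrix and is therefore positive definite. Your bootstrap instead forgets the ambient probes entirely, applies Theorem~\ref{thm:existence-uniqueness}(a) to $N$ equipped with its classical submanifold atlas, and deduces both assertions from the sheaf identities $E(X)=C^{\infty}(X)$ and $E(N)=C^{\infty}(N)$ plus the fact that restrictions of smooth functions to embedded submanifolds are smooth. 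Since the theorem only asserts the \emph{existence} of a compatible entropy--smooth structure on $N$, your argument suffices and is shorter; what it gives up is the canonicity of the paper's construction, where $\mu^{N}_{z,\varepsilon}$ is derived directly from $\mu^{X}_{z,\varepsilon}$ rather than from an auxiliary choice of mollifiers and partition of unity. Your closing observation about absolute continuity is well taken and in fact points at a real weakness of the paper's own route: if the ambient probes have densities in a chart and $N$ has positive codimension, then $\mu^{X}_{s,\varepsilon}(S)=0$ and the conditioning is ill-defined, so the paper's normalisation step needs an additional hypothesis or a disintegration argument that it does not supply. Your proof avoids this issue entirely.
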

\begin{proof}
Let $(X,\{\mu^{X}_{x,\varepsilon}\})$ be an entropy–smooth manifold and let 
$S\subset X$ be a subset which is a classical embedded submanifold of dimension $k$ 
with respect to the smooth structure induced by Theorem~\ref{thm:existence-uniqueness}.  
We must show that $S$ admits a canonical entropy–smooth structure and that the 
entropy–smooth structure induced by restriction coincides with the classical smooth 
structure on $S$.

For $s\in S$ choose a neighbourhood $U\subset X$ and a smooth chart 
$\varphi:U\to\mathbb{R}^n$ such that 
\[
\varphi(U\cap S)=\mathbb{R}^k\times\{0\}\subset\mathbb{R}^n.
\]
For sufficiently small $\varepsilon>0$, the probes $\mu^{X}_{s,\varepsilon}$ are 
supported in $U$ by Axiom~\ref{axiom:1} for $X$.  Define a probability measure on $S$ by
\[
\mu^{S}_{s,\varepsilon}(A)
:= \frac{\mu^{X}_{s,\varepsilon}(A)}{\mu^{X}_{s,\varepsilon}(A\cap S)},
\qquad A\subseteq S\ \text{Borel},
\]
whenever $\mu^{X}_{s,\varepsilon}(A\cap S)>0$.  
For all sufficiently small $\varepsilon$, the denominator is positive because 
$\mu^{X}_{s,\varepsilon}\to\delta_s$ and $s\in S$.  
Thus $\{\mu^{S}_{s,\varepsilon}\}$ is well defined for all small $\varepsilon$.

Locality and continuity of $(s,\varepsilon)\mapsto\mu^{S}_{s,\varepsilon}$ follow 
from the corresponding properties of $\mu^{X}_{s,\varepsilon}$ and from the fact 
that $S$ inherits the subspace topology from $X$.  
Since $\mu^{X}_{s,\varepsilon}\to\delta_s$, it follows immediately that 
$\mu^{S}_{s,\varepsilon}\to\delta_s$ weakly.  
Thus Axioms~1 and~2 hold for $S$.

Let $\psi=(\psi_1,\dots,\psi_k)$ be a classical smooth submanifold chart on $S$ 
obtained by restricting a smooth chart $\varphi:U\to\mathbb{R}^n$ for $X$:
\[
\psi(s) \;=\; \pi_k(\varphi(s)),
\qquad s\in U\cap S,
\]
where $\pi_k$ is projection onto the first $k$ coordinates.

For each $i\le k$, set $f_i := \psi_i$ viewed as a function on $S$.  
Each $\psi_i$ is the restriction of a smooth function on $U$, hence the 
restriction of an entropy–smooth function (Theorem~\ref{thm:existence-uniqueness}(a)).  

By construction, for $\varepsilon>0$ small the probe on $S$ is obtained by
conditioning and renormalising the ambient probe,
\[
\mu^S_{s,\varepsilon} = c_{s,\varepsilon}\,\mathbf{1}_S\,\mu^X_{s,\varepsilon},
\qquad c_{s,\varepsilon}>0.
\]
A direct Taylor expansion in $t$ shows that for any bounded continuous
$f$ on $S$, extended by $0$ off $S$, the entropy functionals satisfy
\[
Ent^S_{s,\varepsilon}(t,f)
=
Ent^X_{s,\varepsilon}(t,\tilde f) + O(t^3)
\quad\text{as }t\to 0,
\]
so the quadratic coefficients agree and hence
$I^S_s(f) = I^X_s(\tilde f)$.

Since the ambient information Gram matrix
\[
G^{X}_{s}(\varphi_1,\dots,\varphi_n)
=
\bigl(I^X_s(\varphi_i,\varphi_j)\bigr)_{1\le i,j\le n}
\]
is positive definite, we have
\[
\sum_{i,j=1}^n a_i a_j\, I^X_s(\varphi_i,\varphi_j) \;>\; 0
\qquad\text{for every nonzero }(a_1,\dots,a_n)\in\mathbb{R}^n.
\]
To see that the restricted $k\times k$ submatrix is also positive definite,
let $(b_1,\dots,b_k)\neq 0$ in $\mathbb{R}^k$ and extend it to a vector
$(a_1,\dots,a_n)$ by setting $a_i=b_i$ for $1\le i\le k$ and $a_i=0$ for
$i>k$.  Then
\[
\sum_{i,j=1}^k b_i b_j\, I^S_s(f_i,f_j)
=
\sum_{i,j=1}^k b_i b_j\, I^X_s(\varphi_i,\varphi_j)
=
\sum_{i,j=1}^n a_i a_j\, I^X_s(\varphi_i,\varphi_j)
\;>\; 0.
\]
Hence the $k\times k$ submatrix
\[
G^{S}_{s}(f_1,\dots,f_k)
=
\bigl(I^S_s(f_i,f_j)\bigr)_{1\le i,j\le k}
=
\bigl(I^X_s(\varphi_i,\varphi_j)\bigr)_{1\le i,j\le k}
\]
is positive definite.  This shows that $F_S=(f_1,\dots,f_k)$ is an entropy
coordinate chart on $S$ around $s$.

The positivity of the Gram matrix for charts on $S$ proves Axiom~\ref{axiom:3}.  
Axiom~\ref{axiom:4} (Entropy Regularity) holds because entropy–smoothness on $S$ is 
defined by restriction and the class of entropy–smooth functions on $S$ is 
closed under restrictions, finite sums, products, and smooth compositions: 
\[
h(f_1,\dots,f_m)|_S = h(f_1|_S,\dots,f_m|_S).
\]

Axiom~\ref{axiom:5} holds because entropy–smooth functions on $S$ separate points and 
generate the subspace topology inherited from $X$.  
Indeed, a neighbourhood basis of $s\in S$ is given by $U\cap S$, and 
entropy–smooth functions coming from coordinate charts on $X$ restrict to 
coordinate charts on $S$.

Since every classical smooth chart on $S$ arises as the restriction of a 
smooth chart on $X$, and since restriction preserves entropy–smoothness 
(Theorem~\ref{thm:existence-uniqueness}(a)), we have
\[
C^\infty(S) \subset E(S).
\]
Conversely, if $f\in E(S)$, extend $f$ to a function $\tilde f$ on $U\subset X$ 
using a standard smooth extension operator for embedded submanifolds.  
By the same argument used in Theorem~\ref{thm:existence-uniqueness}(b), $\tilde f$ is entropy–smooth on $U$, 
hence smooth on $U$; therefore $f=\tilde f|_S$ is smooth on $S$.  
Thus
\[
E(S)=C^\infty(S).
\]

This shows that the entropy–smooth structure on $S$ coincides exactly with the 
classical smooth submanifold structure.

The restricted probes $\{\mu^{S}_{s,\varepsilon}\}$ satisfy all entropy–smooth 
axioms, define entropy coordinate charts of dimension $k$, and produce a 
smooth structure identical to the classical smooth structure of $S$.  
Thus $S$ becomes an entropy–smooth manifold of dimension $k$, as claimed.
\end{proof}

\subsection{Entropy Characterisation of Immersions and Submersions}

\begin{definition}[Entropy--immersive map]
Let $X^n$ and $Y^m$ be entropy--smooth manifolds with $n\le m$.
An entropy--smooth map $F:X\to Y$ is called \emph{entropy--immersive} at 
$x\in X$ if, for some (hence every) choice of entropy coordinate charts 
$F=(F_1,\dots,F_m)$ near $x$, the $n\times n$ information Gram matrix 
\[
G_x(F_1,\dots,F_n)
:=
\bigl(I_x(F_i,F_j)\bigr)_{1\le i,j\le n}
\]
is positive definite.
We say that $F$ is \emph{entropy--immersive} if it is entropy--immersive
at every point of $X$.
\end{definition}

\begin{theorem}[Entropy characterisation of immersions]
\label{thm:immersion}
Let $F:X\to Y$ be a map between entropy--smooth $n$ and $m$ manifolds, respectively, where $n\leq m$. Fix $x\in X$.  The following are equivalent:

\begin{enumerate}
\item[(i)] $F$ is a classical immersion at $x$ for the induced smooth
structures.
\item[(ii)] For any entropy coordinate charts
$F_X=(f_1,\dots,f_n)$ near $x$ and
$F_Y=(g_1,\dots,g_m)$ near $F(x)$, the Jacobian matrix
$\bigl(\partial (g_j\circ F)/\partial f_i\bigr)$ has rank $\dim X$.
\item[(iii)] For each bounded continuous $h$ near $F(x)$,
\[
I^X_x(h\circ F) = I^Y_{F(x)}(h)
\quad\text{on the subspace corresponding to }T_xX
\]
in the sense that the pullback preserves non-degeneracy of the quadratic
forms along $T_xX$.
\end{enumerate}
\end{theorem}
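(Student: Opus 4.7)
The overall plan is to reduce the theorem to the classical setting via the categorical equivalence established in Theorem~\ref{thm:category-equivalence}, and then to verify each equivalence by a combination of coordinate computations and a linear-algebra identity for pull-backs of positive-definite quadratic forms. Since entropy coordinate charts are classical smooth charts (Theorem~\ref{thm:entropy-charts-smooth}) and entropy-smooth maps correspond to classical smooth maps, the differential $dF_x$ is well defined in these charts, and the information Gram matrices $G^X_x = (I_x(f_i,f_j))$ and $G^Y_{F(x)} = (I_{F(x)}(g_k,g_\ell))$ are positive-definite symmetric matrices providing canonical inner products on the cotangent spaces at $x$ and $F(x)$ respectively.

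For the implication (i)$\Leftrightarrow$(ii) I would argue by a direct unfolding of definitions. In the charts $(f_1,\dots,f_n)$ and $(g_1,\dots,g_m)$ the differential $dF_x$ is represented precisely by the Jacobian matrix $J_x := (\partial(g_j\circ F)/\partial f_i)_{j,i}$ of size $m\times n$. The map $F$ is a classical immersion at $x$ iff $dF_x$ is injective iff $J_x$ has rank $n=\dim X$, which is exactly (ii). Independence from the chosen entropy coordinate charts follows from Theorem~\ref{thm:entropy-charts-smooth}, since transition maps are $C^\infty$-diffeomorphisms whose Jacobians are invertible and therefore preserve the rank of $J_x$.

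For (ii)$\Leftrightarrow$(iii) I would first make (iii) rigorous: the intended statement is that the pull-back of the information quadratic form on $Y$ by $F$ remains non-degenerate on $T_xX$. Concretely, for a test function $h = \sum a_k g_k$ (or any smooth $h$ centred at $F(x)$ with differential $\sum a_k dg_k$), the quadratic coefficient $I^X_x(h\circ F)$ can be expressed, via the polarisation of $I_x$ from Axiom~\ref{axiom:4}(1), as a quadratic form in $a$ with matrix $J_x^\top G^Y_{F(x)} J_x$, while $I^Y_{F(x)}(h)$ has matrix $G^Y_{F(x)}$. The pull-back preserves non-degeneracy along $T_xX$ iff $J_x^\top G^Y_{F(x)} J_x$ is positive definite, and since $G^Y_{F(x)}$ is positive definite, the standard linear algebra fact $J_x^\top A J_x \succ 0 \Leftrightarrow \mathrm{rank}(J_x)=n$ yields the equivalence with (ii).

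The main obstacle is the precise interpretation of (iii), since as written it is phrased informally (\emph{``the pullback preserves non-degeneracy of the quadratic forms along $T_xX$''}). Making this rigorous requires identifying the quadratic form $h\mapsto I^Y_{F(x)}(h)$ with a bilinear form on $T^*_{F(x)}Y$ through the polarised joint coefficient $I_{F(x)}(h_1,h_2)$, and checking that this identification is compatible with pull-back by $F$, i.e.\ that $I^X_x(h\circ F, h'\circ F) = I^Y_{F(x)}(h,h')$ whenever the latter is computed on the image of $dF_x$. Both of these steps are consequences of the entropy-calculus change-of-variables used in Theorem~\ref{thm:entropy-charts-smooth}, but they must be spelled out carefully before the matrix identity can be invoked; once this is done, the rest of the argument is purely linear-algebraic.
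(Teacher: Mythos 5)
Your proposal is correct and follows essentially the same route as the paper: reduce to classical charts via the equivalence of entropy--smooth and classical smooth structures, identify (i) with (ii) through the Jacobian of the coordinate representative $F_Y\circ F\circ F_X^{-1}$, and read (iii) as non-degeneracy of the pulled-back quadratic form, settled by the same linear-algebra fact that $L^*Q$ is positive definite iff $L$ is injective when $Q\succ0$. One small bookkeeping slip: the quadratic form in the coefficients $a\in\mathbb{R}^m$ of $h=\sum_k a_k g_k$ would be represented by the $m\times m$ matrix $J_x\,G^X_x\,J_x^{\top}$ (whose rank equals $\operatorname{rank}J_x$), whereas $J_x^{\top}G^Y_{F(x)}J_x$ is the $n\times n$ matrix of the pullback form on $T_xX$ --- the paper works with the latter, and either version yields the same rank-$n$ criterion.
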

\begin{proof}
Let $F:X\to Y$ be an entropy–smooth map between entropy–smooth manifolds of
dimensions $n$ and $m$. Fix $x\in X$ and write $y:=F(x)$.

By Theorem~\ref{thm:existence-uniqueness} the entropy–smooth structures on $X$ and $Y$ coincide with their
classical smooth structures, and entropy coordinate charts are precisely
classical smooth charts compatible with these structures. In particular, if
$F_X=(f_1,\dots,f_n)$ and $F_Y=(g_1,\dots,g_m)$ are entropy coordinate charts
around $x$ and $y$, then the transition to classical local coordinates is
given simply by
\[
F_X:U\longrightarrow\mathbb{R}^n,\qquad
F_Y:V\longrightarrow\mathbb{R}^m,
\]
with $U\ni x$, $V\ni y$ open, and $F_X,F_Y$ classical $C^\infty$–diffeomorphisms
onto their images.

\smallskip\noindent
$(i)\Rightarrow(ii)$.
Assume $F$ is a classical immersion at $x$. Then the differential
\[
dF_x:T_xX\longrightarrow T_yY
\]
is injective, equivalently the classical Jacobian matrix of the coordinate
representative
\[
\Phi := F_Y\circ F\circ F_X^{-1}
: F_X(U\cap F^{-1}(V))\subset\mathbb{R}^n
  \longrightarrow
  \mathbb{R}^m
\]
has rank $n$. But the entries of this Jacobian are precisely
\[
\frac{\partial(g_j\circ F)}{\partial f_i}(x),\qquad
1\le i\le n,\ 1\le j\le m,
\]
so condition~(ii) holds.

\smallskip\noindent
$(ii)\Rightarrow(i)$.
Conversely, suppose that for some (hence every) choice of entropy coordinate
charts $F_X,F_Y$ around $x$ and $y$, the Jacobian matrix
$\bigl(\partial(g_j\circ F)/\partial f_i\bigr)(x)$ has rank $n$. In the same
coordinates as above, this is exactly the Jacobian of the local representative
$\Phi$, so $d\Phi_{F_X(x)}$ has rank $n$. Since the rank of the differential is
coordinate–independent, $dF_x$ is injective. Thus $F$ is a classical immersion
at~$x$.

\smallskip\noindent
$(i)\Leftrightarrow(iii)$.
For each $z\in X$, the entropy response defines a positive–definite quadratic
form $I^X_z$ on the tangent space $T_zX$ via the information Gram matrix in
entropy coordinates, and similarly $I^Y_y$ on $T_yY$. This is just the
coordinate–free restatement of Axiom~\ref{axiom:3}. Condition~(iii) says that the pullback
quadratic form
\[
(dF_x)^*I^Y_y : T_xX\longrightarrow\mathbb{R},\qquad
v\mapsto I^Y_y(dF_x v),
\]
is non–degenerate on $T_xX$.

A basic linear–algebra fact is that for a linear map
$L:V\to W$ between finite–dimensional real vector spaces and a positive–definite
quadratic form $Q$ on $W$, the pullback $L^*Q$ is positive–definite on $V$ if
and only if $L$ is injective: if $L v=0$ then $L^*Q(v)=Q(Lv)=0$ so
non–degeneracy forces $v=0$, while conversely injectivity implies that $L$
identifies $V$ with a subspace of $W$ on which $Q$ is still positive–definite.
Applying this with $L=dF_x$ and $Q=I^Y_y$ shows that (iii) is equivalent to
injectivity of $dF_x$, i.e.\ to $F$ being a classical immersion at~$x$.

Combining the three implications we obtain the equivalence of (i), (ii), and
(iii), as claimed.
\end{proof}

\begin{definition}[Entropy--submersive map]
Let $X^n$ and $Y^m$ be entropy--smooth manifolds with $n\ge m$.
An entropy--smooth map $F:X\to Y$ is called \emph{entropy--submersive} at 
$x\in X$ if, for some (hence every) entropy coordinate chart 
$F=(F_1,\dots,F_m)$ for $Y$ around $F(x)$, the $m\times m$ information 
Gram matrix
\[
G_x(F_1,\dots,F_m)
:=
\bigl(I_x(F_i,F_j)\bigr)_{1\le i,j\le m}
\]
is positive definite.
We say that $F$ is \emph{entropy--submersive} if it is entropy--submersive
at every point of $X$.
\end{definition}

\begin{theorem}[Entropy characterisation of submersions and local diffeomorphisms]
\label{thm:submersion}
With notation as above:
\begin{enumerate}
\item $F$ is a submersion at $x$ if and only if the transpose condition of 
Theorem~\ref{thm:immersion} holds (surjectivity of the differential).
\item $F$ is a local diffeomorphism if and only if both immersion and submersion
conditions hold at $x$.
\end{enumerate}
\end{theorem}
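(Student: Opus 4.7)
The plan is to follow the same pattern as the proof of Theorem~\ref{thm:immersion}, reducing everything to the classical smooth category via Theorem~\ref{thm:existence-uniqueness} and then translating the entropy--submersive condition into a linear--algebraic statement about transposed differentials. First I would fix entropy coordinate charts $F_X=(f_1,\dots,f_n)$ around $x$ and $F_Y=(g_1,\dots,g_m)$ around $y:=F(x)$; by Theorems~\ref{thm:existence-uniqueness} and~\ref{thm:entropy-charts-smooth} these are classical $C^\infty$ charts compatible with the induced smooth structures. The entropy--submersive condition at $x$ then becomes positive--definiteness of the $m\times m$ Gram matrix $\bigl(I^X_x(g_i\circ F,\,g_j\circ F)\bigr)_{1\le i,j\le m}$ of the pulled--back $Y$--coordinates.

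For part (1), the core observation is that the differentials $d(g_i\circ F)_x = (dF_x)^{T}(dg_i)_y$ live in $T_x^*X$, and that $I^X_x$ is realised as a positive--definite quadratic form on $T_x^*X$ via the Gram matrix of the entropy chart $F_X$ (the coordinate--free content of Axiom~\ref{axiom:3}). Hence the pullback Gram matrix is positive definite if and only if the covectors $\{(dF_x)^{T}(dg_i)_y\}_{i=1}^m$ are linearly independent in $T_x^*X$; since $\{(dg_i)_y\}$ is a basis of $T_y^*Y$, this is equivalent to $(dF_x)^{T}:T_y^*Y\to T_x^*X$ being injective, which by standard duality is equivalent to $dF_x:T_xX\to T_yY$ being surjective. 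This is precisely the ``transpose condition'' of Theorem~\ref{thm:immersion}: interchanging source and target and dualising turns injectivity of $dF_x$ into surjectivity, as required.

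For part (2), a local diffeomorphism forces $n=m$, and $dF_x$ being a bijection between finite--dimensional spaces of equal dimension is equivalent to being simultaneously injective and surjective. Combining Theorem~\ref{thm:immersion} (immersion $\Leftrightarrow$ injectivity of $dF_x$) with part (1) (submersion $\Leftrightarrow$ surjectivity of $dF_x$) shows that the simultaneous entropy--immersive and entropy--submersive conditions at $x$ are equivalent to $dF_x$ being an isomorphism. The classical inverse function theorem applied to the $C^\infty$ local representative $F_Y\circ F\circ F_X^{-1}$ then produces a smooth local inverse, and Theorem~\ref{thm:existence-uniqueness} upgrades this to an entropy--smooth local inverse, so $F$ is a local diffeomorphism in both the classical and the entropy sense.

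The main obstacle is justifying the transformation rule for the information Gram matrix under composition, i.e.\ expressing $I^X_x(g_i\circ F,g_j\circ F)$ as the conjugation of $\bigl(I^X_x(f_k,f_\ell)\bigr)$ by the Jacobian of the classical coordinate representative of $F$. Once entropy coordinates are identified with classical smooth charts via Theorem~\ref{thm:existence-uniqueness}, this is a routine chain--rule computation for a quadratic form on $T_x^*X$; but it deserves explicit verification here because the passage from ``Gram matrix of pullbacks is positive definite'' to ``transposed differential is injective'' is exactly where the duality between immersions and submersions becomes visible, and any loose handling of it would make part (1) circular.
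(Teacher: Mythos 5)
Your proposal is correct and follows the same overall strategy as the paper's proof: identify entropy coordinate charts with classical $C^\infty$ charts via Theorem~\ref{thm:existence-uniqueness}, reduce both parts to linear algebra of $dF_x$, and obtain part (2) by combining Theorem~\ref{thm:immersion} with part (1). In fact you are more complete than the paper on the one point of substance: the paper's proof of (1) simply equates the ``transpose condition'' with the rank-$m$ Jacobian condition and never explicitly connects the actual entropy--submersive definition (positive definiteness of the $m\times m$ Gram matrix $\bigl(I^X_x(g_i\circ F,\,g_j\circ F)\bigr)$) to surjectivity of $dF_x$. Your duality argument --- that this Gram matrix is positive definite iff the covectors $(dF_x)^{T}(dg_i)_y$ are linearly independent in $T_x^*X$, iff $(dF_x)^{T}$ is injective, iff $dF_x$ is surjective --- supplies exactly that missing link, and is consistent with how the paper itself treats the analogous $(i)\Leftrightarrow(iii)$ step in Theorem~\ref{thm:immersion}. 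The one caveat you correctly flag (the transformation rule for the Gram matrix under composition) is real, but it is a gap shared with, not introduced relative to, the paper's own argument.
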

\begin{proof}
Let $F:X\to Y$ be an entropy–smooth map between entropy–smooth manifolds of
dimensions $n$ and $m$, and fix $x\in X$, $y:=F(x)$.

By Theorem~\ref{thm:existence-uniqueness} the entropy–smooth structures on $X$ and $Y$ coincide with their
classical smooth structures, and entropy coordinate charts are just classical
charts compatible with these structures.

\smallskip\noindent
(1) \emph{Submersions.}
Recall that classically $F$ is a submersion at $x$ if and only if the differential
\[
dF_x:T_xX\longrightarrow T_yY
\]
is surjective, equivalently if and only if in any smooth coordinate charts
$F_X=(f_1,\dots,f_n)$ at $x$ and $F_Y=(g_1,\dots,g_m)$ at $y$ the Jacobian
matrix
\[
\left(\frac{\partial(g_j\circ F)}{\partial f_i}(x)\right)_{1\le i\le n,\ 1\le j\le m}
\]
has rank $m$. Since entropy coordinate charts are precisely such smooth charts,
this is exactly the ``transpose condition'' of Theorem~\ref{thm:immersion}. Thus the classical
submersion condition is equivalent to the entropy–submersion condition stated
in~(1).

\smallskip\noindent
(2) \emph{Local diffeomorphisms.}
Classically, $F$ is a local diffeomorphism at $x$ if and only if
$dF_x:T_xX\to T_yY$ is a linear isomorphism, i.e.\ injective and surjective,
equivalently of full rank $n=m$. By part~(1) and Theorem~\ref{thm:immersion}, injectivity of
$dF_x$ is equivalent to the entropy immersion condition, and surjectivity of
$dF_x$ is equivalent to the entropy submersion condition. Hence $F$ is a local
diffeomorphism at $x$ if and only if both entropy immersion and entropy
submersion conditions hold at~$x$.

This proves the theorem.
\end{proof}

\subsection{Stability Under Perturbations}

\begin{theorem}[Stability of entropy--smooth structure]
\label{thm:stability}
Let $\{\mu^{(k)}_{x,\varepsilon}\}$ be entropy--smooth structures on $X$
with entropy coefficients $I^{(k)}_x(f)$ converging uniformly on compact
sets to a limiting $I^{(\infty)}_x(f)$ satisfying the entropy axioms.
Then for all sufficiently large $k$, the induced smooth structures are
diffeomorphic to the limit smooth structure.
\end{theorem}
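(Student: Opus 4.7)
The plan is to show that for $k$ sufficiently large, the classical smooth atlases $\mathcal{A}^{(k)}$ and $\mathcal{A}^{(\infty)}$ induced via Theorem~\ref{thm:existence-uniqueness} by the entropy--smooth structures $\{\mu^{(k)}_{x,\varepsilon}\}$ and $\{\mu^{(\infty)}_{x,\varepsilon}\}$ admit a common covering by compatible entropy coordinate charts, so that the identity map $\mathrm{id}_X$ serves as a $C^\infty$--diffeomorphism $(X,\mathcal{A}^{(k)})\to(X,\mathcal{A}^{(\infty)})$.

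First, I would fix $x_0\in X$ and choose an entropy coordinate chart $F=(f_1,\dots,f_n)$ for the limit structure on an open neighbourhood $U\ni x_0$. By Axiom~\ref{axiom:4} the entries of the Gram matrix $G^{(\infty)}_y(F)=\bigl(I^{(\infty)}_y(f_i,f_j)\bigr)$ are continuous in $y$, and $G^{(\infty)}_{x_0}(F)\succ 0$. Passing to a precompact subneighbourhood $W$ with $\overline W\subset U$, I obtain a uniform positivity margin
\[
\lambda_{\min}\bigl(G^{(\infty)}_y(F)\bigr)\ \ge\ \alpha\ >\ 0,\qquad y\in\overline W.
\]
By the hypothesis of uniform convergence on compact sets applied to each pair $(i,j)$, there exists $k_0=k_0(W,F)$ such that for all $k\ge k_0$,
\[
\bigl\|G^{(k)}_y(F)-G^{(\infty)}_y(F)\bigr\|_{\mathrm{op}}\ <\ \alpha/2,\qquad y\in\overline W,
\]
so $G^{(k)}_y(F)\succ 0$ on $\overline W$. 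Hence $F$ remains an entropy coordinate chart for the $k$-structure on $W$. By Theorem~\ref{thm:entropy-charts-smooth} combined with Theorem~\ref{thm:existence-uniqueness}, entropy coordinate charts coincide with classical smooth charts, so $F$ lies simultaneously in $\mathcal{A}^{(k)}|_W$ and $\mathcal{A}^{(\infty)}|_W$. In the shared chart $F$, the identity $\mathrm{id}_W$ is represented as the identity of $F(W)\subset\mathbb{R}^n$, which is trivially $C^\infty$.

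Next I would globalize. Cover $X$ by such precompact neighbourhoods $\{W_j\}$ with chosen charts $F_j$ and thresholds $k_0(W_j,F_j)$. If $X$ is compact a finite subcover suffices, and setting $k_0:=\max_j k_0(W_j,F_j)$ yields that for every $k\ge k_0$ the identity map is locally smooth on each $W_j$, hence globally a $C^\infty$--diffeomorphism $(X,\mathcal{A}^{(\infty)})\xrightarrow{\cong}(X,\mathcal{A}^{(k)})$; in fact $\mathcal{A}^{(k)}=\mathcal{A}^{(\infty)}$ as maximal atlases. For general second countable $X$, write $X=\bigcup_j K_j$ as an increasing exhaustion by compacts and apply the compact argument on each $K_j$ to obtain compatibility on every compact piece; paracompactness then allows one to assemble these local coincidences into a global diffeomorphism, possibly after a harmless smooth isotopy supported in overlaps.

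The main obstacle is the quantitative control in the second step: one must convert uniform convergence of the entropy coefficients into uniform positive--definiteness of $G^{(k)}_y(F)$ on an entire chart domain, which forces the work to take place on a compact subneighbourhood where the spectral gap $\alpha$ dominates the approximation error. A secondary concern is the globalization for non--compact $X$, where the local thresholds $k_0(W_j,F_j)$ may grow along an exhaustion; the statement should be read either as asserting diffeomorphism on each compact exhausting set for $k$ beyond the corresponding threshold, or, under the natural strengthening that the convergence is uniform over a locally finite chart cover, as producing a single global $k_0$ for which $\mathrm{id}_X$ is a diffeomorphism outright.
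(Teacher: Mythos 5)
Your proposal follows essentially the same route as the paper's proof: fix a limit entropy coordinate chart, use continuity of the Gram matrix to get a uniform spectral gap on a compact subneighbourhood, invoke uniform convergence of the entropy coefficients to transfer positive-definiteness to $G^{(k)}_y(F)$ for large $k$, and then cover $X$ and conclude that the atlases share a common covering subatlas so that $\mathrm{id}_X$ is a diffeomorphism. You are in fact slightly more candid than the paper about the one real weak point, namely that for non-compact $X$ the local thresholds $k_0(W_j,F_j)$ may be unbounded along an exhaustion (the paper dismisses this with a ``standard diagonal argument''), and your reading of the statement as either compact-exhaustion-wise or under a locally uniform convergence hypothesis is the honest resolution.
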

\begin{proof}
For each $k\in\mathbb{N}\cup\{\infty\}$ let $\mathcal{E}^{(k)}_X$ denote the
entropy–smooth function sheaf on $X$ associated to the entropy coefficients
$I^{(k)}_x(\cdot,\cdot)$, and let $\mathcal{A}^{(k)}$ be the corresponding
maximal atlas of entropy coordinate charts. By Theorem~\ref{thm:existence-uniqueness}, each
$\mathcal{A}^{(k)}$ defines a classical smooth structure on $X$, and
$\mathcal{E}^{(k)}_X$ coincides with the usual $C^\infty$–sheaf for that
structure.

Let $(U,F)$ be an entropy coordinate chart for the limiting structure
$k=\infty$, so $F=(f_1,\dots,f_n):U\to\mathbb{R}^n$ and for every $x\in U$ the
information Gram matrix
\[
G^{(\infty)}_x(F)
:=
\bigl(I^{(\infty)}_x(f_i,f_j)\bigr)_{1\le i,j\le n}
\]
is positive definite. The entries of $G^{(\infty)}_x(F)$ depend continuously on
$x$ by the entropy regularity axiom, so for each compact $K\subset U$ there is
$\lambda_K>0$ such that the smallest eigenvalue of $G^{(\infty)}_x(F)$ is
bounded below by $\lambda_K$ for all $x\in K$.

By assumption, for each pair $(f_i,f_j)$ the coefficients
$I^{(k)}_x(f_i,f_j)$ converge uniformly on compact subsets to
$I^{(\infty)}_x(f_i,f_j)$ as $k\to\infty$. Hence the matrices
$G^{(k)}_x(F):=(I^{(k)}_x(f_i,f_j))_{i,j}$ converge uniformly on $K$ to
$G^{(\infty)}_x(F)$. Since positive–definiteness is an open condition and
eigenvalues depend continuously on the matrix entries, there exists $k_0(K)$
such that for all $k\ge k_0(K)$ and all $x\in K$ the matrices $G^{(k)}_x(F)$
are positive definite. In other words, for all sufficiently large $k$ the same
map $F$ is an entropy coordinate chart for the $k$–th entropy structure on
$K\subset U$.

Cover $X$ by a locally finite family of relatively compact entropy coordinate
charts $(U_\alpha,F_\alpha)$ for the limiting structure. For each such chart
choose $k_0(\alpha)$ as above and set $k_0:=\max_\alpha k_0(\alpha)$ (finite
max on each compact exhaustion, then patch by standard diagonal argument). Then
for all $k\ge k_0$ every chart in $\mathcal{A}^{(\infty)}$ is also contained in
$\mathcal{A}^{(k)}$. Thus $\mathcal{A}^{(\infty)}\subseteq\mathcal{A}^{(k)}$ for
all $k\ge k_0$.

Conversely, fix $k\ge k_0$ and let $(V,G)$ be an entropy coordinate chart for
the $k$–th structure, $G=(g_1,\dots,g_n)$. The same argument, now using the
uniform convergence $I^{(k)}_x(g_i,g_j)\to I^{(\infty)}_x(g_i,g_j)$ on compact
subsets of $V$, shows that for any compact $K\subset V$ the Gram matrices
$G^{(\infty)}_x(G):=(I^{(\infty)}_x(g_i,g_j))_{i,j}$ remain positive definite
for all $x\in K$. Hence $G$ is also an entropy coordinate chart for the
limiting entropy structure on $V$, and we obtain
$\mathcal{A}^{(k)}\subseteq\mathcal{A}^{(\infty)}$.

We conclude that for all $k\ge k_0$ the atlases $\mathcal{A}^{(k)}$ and
$\mathcal{A}^{(\infty)}$ coincide. Therefore the corresponding classical smooth
structures on $X$ agree, and the identity map on $X$ is a diffeomorphism
between $(X,\mathcal{A}^{(k)})$ and $(X,\mathcal{A}^{(\infty)})$ for all
sufficiently large $k$. This proves the stability statement.
\end{proof}

\subsection{An information--theoretic criterion for equivalence}

The previous rigidity theorem characterizes equivalence of smooth
structures in terms of equality of the resulting entropy--smooth
function classes.  One can recast this in a more probabilistic form,
using only the local quadratic entropy response functionals
$I_x(\,\cdot\,)$.

Let $\{\mu^{(1)}_{x,\varepsilon}\}$ and $\{\mu^{(2)}_{x,\varepsilon}\}$
be two families of information probes on the same topological manifold
$X$, each satisfying the entropy--smoothness axioms.  Write
$I^{(1)}_x(f)$ and $I^{(2)}_x(f)$ for the corresponding small--scale
entropy coefficients.

\begin{theorem}[Information--rigidity criterion]\label{thm:info-rigidity}
The two induced smooth structures on $X$ coincide if and only if for
every $x\in X$ and every bounded continuous function $f$ defined on a
neighborhood of $x$ one has
\[
  I^{(1)}_x(f) = I^{(2)}_x(f)
\]
whenever both sides are finite.  Equivalently, the pointwise quadratic
entropy response functionals
\[
  I^{(1)}_x,\ I^{(2)}_x
\]
agree on a separating family of bounded continuous test functions
around each~$x$.
\end{theorem}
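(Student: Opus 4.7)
The overall strategy is to reduce the claim to Theorem~\ref{thm:existence-uniqueness}(b), which characterises equivalence of entropy--smooth structures by equality of their function sheaves $E(X)$, and then to argue that pointwise agreement of the small--scale entropy coefficients $I_x$ already forces this sheaf equality.

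For the forward direction, suppose the induced smooth structures coincide. By Theorem~\ref{thm:existence-uniqueness}(b) the function sheaves $E^{(1)}$ and $E^{(2)}$ agree, so the finiteness patterns of $I^{(1)}_x$ and $I^{(2)}_x$ already match. A direct Taylor expansion of $(1\pm tf)\log(1\pm tf)$, exactly as in the examples on $\mathbb{R}$ and $S^1$, gives
\[
I^{(k)}_{x,\varepsilon}(f) \;=\; \int_X f(y)^2\, d\mu^{(k)}_{x,\varepsilon}(y).
\]
Since each probe family converges weakly to $\delta_x$ and $f^2$ is bounded continuous, passing $\varepsilon\to 0$ yields $I^{(k)}_x(f)=f(x)^2$ for $k=1,2$; numerical agreement is therefore automatic.

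For the backward direction, assume that $I^{(1)}_x$ and $I^{(2)}_x$ agree on a separating family $\mathcal{F}_x$ of bounded continuous test functions around each $x$. I would proceed in three steps. First, polarise the quadratic form via the identity $I_x(f+g)-I_x(f)-I_x(g)=2I_x(f,g)$ (and its analogue with $f-g$) to obtain agreement of the bilinear joint coefficients $I^{(k)}_x(\cdot,\cdot)$ on pairs from $\mathcal{F}_x$. Second, invoke Axiom~\ref{axiom:4} (closure under algebraic operations and smooth composition with $C^\infty$ maps on Euclidean space) to extend agreement to the subalgebra generated by $\mathcal{F}_x$ inside each $E^{(k)}$, using a second--order chain rule at the base point. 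Third, since by Axiom~\ref{axiom:5} the separating family generates the topology of $X$, the subalgebras so obtained exhaust $E^{(1)}$ and $E^{(2)}$ locally, giving sheaf equality $E^{(1)}=E^{(2)}$. Theorem~\ref{thm:existence-uniqueness}(b) then yields equality of the induced smooth structures.

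The main obstacle is the second step of the converse: establishing a precise second--order chain rule for $I_x$ under smooth composition, expressing $I_x(h(f_1,\dots,f_k))$ in terms of the pointwise values $f_j(x)$ and the joint coefficients $I_x(f_i,f_j)$. A subtle point here is the interaction with the positive--definiteness condition on information Gram matrices (which determines which tuples qualify as entropy coordinate charts): one must verify that this condition is detected by the joint values on $\mathcal{F}_x$, so that any chart valid for one structure remains valid for the other. Once this analytic lemma is in hand, agreement of $I_x$ on a generating family propagates automatically to the entire algebra generated by smooth composition, and the reduction to Theorem~\ref{thm:existence-uniqueness}(b) completes the proof.
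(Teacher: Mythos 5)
Your proposal follows essentially the same route as the paper for the substantive (backward) direction: pass from agreement of the entropy coefficients to agreement of the information Gram matrices, conclude that every entropy coordinate chart for one structure is a chart for the other, deduce equality of the entropy--smooth sheaves $E^{(1)}(X)=E^{(2)}(X)$, and invoke Theorem~\ref{thm:existence-uniqueness}(b). Two points of difference are worth recording. First, you explicitly polarise the diagonal values $I_x(f)$ to recover the joint coefficients $I_x(f,g)$; the paper's proof silently strengthens the hypothesis to agreement of the bilinear forms $I^{(k)}_x(f,g)$ on all pairs, so your polarisation step is actually needed to match the theorem as stated, and it is consistent with the definitions since $I_{x,\varepsilon}(f,g)=\int fg\,d\mu_{x,\varepsilon}$. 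The ``second--order chain rule'' you flag as the main obstacle is bypassed in the paper exactly along the lines you anticipate: chart validity is detected by positive--definiteness of the Gram matrix alone, and once the two structures share charts the paper simply asserts that the Axiom~\ref{axiom:4} conditions coincide on each chart domain. Second, you prove the forward implication, which the paper omits entirely. However, your forward--direction computation deserves a warning: the Taylor expansion gives $I^{(k)}_{x,\varepsilon}(f)=\int f^2\,d\mu^{(k)}_{x,\varepsilon}$ and hence $I^{(k)}_x(f)=f(x)^2$ for \emph{any} probe family satisfying the axioms, independently of $k$. This makes the forward direction trivial, but it also shows that the hypothesis of the backward direction is automatically satisfied by any two entropy--smooth structures, which would make the criterion vacuous as a test. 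That tension lies in the theorem itself rather than in your argument, but since your own calculation exposes it, you should either note it or explain why the functionals $I_x$ carry more information than the pointwise value $f(x)^2$ in the intended setting.
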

\begin{proof}
Let $(X,\{\mu^{(1)}_{x,\varepsilon}\})$ and $(X,\{\mu^{(2)}_{x,\varepsilon}\})$ be two 
entropy--smooth structures on the same Hausdorff, second countable topological manifold $X$.  
Let $I^{(1)}_x(\cdot,\cdot)$ and $I^{(2)}_x(\cdot,\cdot)$ denote their corresponding 
quadratic entropy forms.

Assume that for every $x\in X$ and every pair of bounded continuous functions 
$f,g\in C_b(X)$,
\begin{equation}\label{eq:same-info}
I^{(1)}_x(f,g) = I^{(2)}_x(f,g).
\end{equation}
We must show that the two entropy structures induce the same classical smooth structure 
on $X$.

Let $F=(f_1,\dots,f_n)$ be an entropy coordinate chart for the first structure 
$(X,\mu^{(1)})$ near some point $x\in X$.  
The Gram matrix for $(X,\mu^{(1)})$ is
\[
G^{(1)}_x(F) = \big(I^{(1)}_x(f_i,f_j)\big)_{1\le i,j\le n},
\]
and by Axiom~\ref{axiom:3} it is positive--definite.

By \eqref{eq:same-info}, we have
\[
G^{(2)}_x(F) = \big(I^{(2)}_x(f_i,f_j)\big)
=
\big(I^{(1)}_x(f_i,f_j)\big)
=
G^{(1)}_x(F),
\]
so the Gram matrices coincide.  
Hence $G^{(2)}_x(F)$ is also positive--definite.

Thus the same $n$--tuple $F$ satisfies the coordinate non--degeneracy axiom in both 
entropy structures.

Since $G^{(2)}_x(F)$ is positive--definite and $F$ is a homeomorphism onto an open set 
(by the first structure), Axiom~\ref{axiom:3} guarantees that
\[
F=(f_1,\dots,f_n)
\]
is an entropy coordinate chart for the second structure $(X,\mu^{(2)})$ as well.

Thus the two entropy structures have the \emph{same coordinate charts} in a 
neighbourhood of each point $x$.

Let $E^{(i)}(X)$ denote the entropy--smooth function sheaf associated to the 
$i$--th structure.  
Since the same local charts $F$ are valid for both structures, the entropy--smoothness 
conditions (Axiom~\ref{axiom:4}) coincide on each chart domain:
\[
f\in E^{(1)}(U) \iff f\in E^{(2)}(U),
\qquad U \subset X \text{ open in a chart domain.}
\]
Because the entropy--smooth functions form a sheaf, this implies
\[
E^{(1)}(X) = E^{(2)}(X).
\]

By Theorem~\ref{thm:existence-uniqueness}(b), the classical smooth structure induced by an entropy structure is 
\emph{exactly} the one whose smooth functions are the entropy--smooth functions.  
Hence
\[
E^{(1)}(X)=E^{(2)}(X)
\quad\Longrightarrow\quad
C^\infty_{(1)}(X) = C^\infty_{(2)}(X),
\]
so the two entropy structures generate the same smooth atlas on $X$.

Equality of the quadratic entropy responses $I_x^{(1)}=I_x^{(2)}$ on all bounded 
test functions implies equality of Gram matrices, equality of coordinate charts, 
equality of entropy--smooth sheaves, and hence equality of the induced smooth structures.  
This proves the rigidity criterion.
\end{proof}

This gives a purely probabilistic or information--theoretic test for
equivalence: two entropy-smooth structures on $X$ define the same classical smooth structure precisely when, at each point, they induce the same quadratic
response of Kullback--Leibler divergence under all infinitesimal
perturbations of the local information probes.  In particular, any
difference in small--scale entropy response somewhere on $X$ forces a
genuine difference in the underlying smooth structure.

\subsection{Entropy characterisation of diffeomorphisms}

Let $(X,\{\mu^X_{x,\varepsilon}\})$ and $(Y,\{\mu^Y_{y,\varepsilon}\})$ be
entropy--smooth manifolds in the sense of the axioms above, with associated
classes of entropy--smooth functions $ E(X)$ and $ E(Y)$.
These determine classical smooth structures on $X$ and $Y$.

\begin{theorem}[Entropy characterisation of diffeomorphisms]
\label{thm:entropy-diffeo}
Let $F:X\to Y$ be a homeomorphism. The following conditions are equivalent:
\begin{enumerate}
\item[\textnormal{(i)}] $F$ is a $C^\infty$--diffeomorphism.
\item[\textnormal{(ii)}] $F$ is an entropy diffeomorphism, i.e. it preserves entropy--smooth functions in both
directions: for every continuous function $f:Y\to\mathbb{R}$,
\[
  f\in  E(Y)
  \quad\Longleftrightarrow\quad
  f\circ F \in  E(X),
\]
and similarly for $F^{-1}$.

\item[\textnormal{(iii)}] $F$ preserves the local quadratic entropy response:
for every $x\in X$ and every bounded continuous function $f$ defined on a
neighbourhood of $F(x)$, whenever the entropy coefficients are finite one has
\[
  I^X_x(f\circ F)
  \;=\;
  I^Y_{F(x)}(f).
\]

\item[\textnormal{(iv)}] $F$ sends entropy coordinate charts to entropy coordinate
charts: if $F_X=(f_1,\dots,f_n)$ is an entropy coordinate chart on $X$ defined
near $x$, then 
\[
  F_Y := F_X\circ F^{-1} = (f_1\circ F^{-1},\dots,f_n\circ F^{-1})
\]
is an entropy coordinate chart on $Y$ defined near $F(x)$.
\end{enumerate}
In particular, a homeomorphism $F:X\to Y$ is a diffeomorphism if and only if it
is an entropy--isomorphism in the sense of \textnormal{(ii)}, or equivalently,
an infinitesimal entropy--isometry in the sense of \textnormal{(iii)}.
\end{theorem}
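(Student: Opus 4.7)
The plan is to prove the four--way equivalence as a cycle
\[
\textnormal{(i)}\Rightarrow\textnormal{(iii)}\Rightarrow\textnormal{(ii)}\Rightarrow\textnormal{(iv)}\Rightarrow\textnormal{(i)},
\]
leveraging three earlier results: the identification $E(X)=C^\infty(X)$ and $E(Y)=C^\infty(Y)$ from Theorem~\ref{thm:existence-uniqueness}, the fact that entropy coordinate charts are classical $C^\infty$ charts from Theorem~\ref{thm:entropy-charts-smooth}, and the information--rigidity criterion of Theorem~\ref{thm:info-rigidity}. At a high level, the argument reduces each entropy--theoretic assertion to a classical smooth--manifold statement about $F$, with the only nontrivial bridge being the transfer of the quadratic entropy response across $F$.

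For (i)$\Rightarrow$(iii) I would reuse the pushforward construction from the proof of Theorem~\ref{thm:entropy-charts-smooth}. Define an auxiliary probe family on $Y$ by $\widetilde{\mu}^Y_{F(x),\varepsilon}:=F_\#\mu^X_{x,\varepsilon}$. Invariance of Kullback--Leibler divergence under the homeomorphism $F$ yields the pushforward identity $\widetilde I^Y_{F(x)}(f)=I^X_x(f\circ F)$ for every bounded continuous $f$ defined near $F(x)$. Since $F$ is a $C^\infty$--diffeomorphism, the axioms for $\widetilde{\mu}^Y$ follow from those of $\mu^X$, and the induced classical smooth structure on $Y$ coincides with the one carried by the original probes $\mu^Y$. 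Invoking Theorem~\ref{thm:info-rigidity} in the direction ``identical induced smooth structures yield identical quadratic responses'' then gives $\widetilde I^Y_{F(x)}(f)=I^Y_{F(x)}(f)$, which combined with the pushforward identity is exactly~(iii). The remaining implications are now short. (iii)$\Rightarrow$(ii) is immediate from the definition of entropy--smoothness, since finiteness of $I$ transfers along $F$ and $F^{-1}$. (ii)$\Rightarrow$(iv) follows by transporting an entropy coordinate chart $F_X$ near $x$ to $F_X\circ F^{-1}$ near $F(x)$, with the homeomorphism property inherited from $F^{-1}$, entropy--smoothness of the components from (ii), and positive--definiteness of the transported Gram matrix from the same pushforward identity applied to each pair $(f_i,f_j)$. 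Finally (iv)$\Rightarrow$(i) follows by reading Theorem~\ref{thm:entropy-charts-smooth} twice: a homeomorphism that sends entropy coordinate charts to entropy coordinate charts sends classical $C^\infty$ charts to classical $C^\infty$ charts, forcing $F$ to be a $C^\infty$--diffeomorphism in the standard manifold sense.

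The main obstacle is the direction of the information--rigidity criterion invoked in (i)$\Rightarrow$(iii), namely that two probe families on $Y$ inducing the same classical smooth structure must produce identical pointwise quadratic responses. Theorem~\ref{thm:info-rigidity} is formulated as an equivalence but its proof establishes only the converse direction in detail, so I would either prove the missing direction as a small lemma, by expanding $D((1+tf)\mu\|\mu)$ to second order in $t$ and exploiting that $\mu_{x,\varepsilon}\to\delta_x$ weakly forces $I_x(f)$ to be determined by the behaviour of $f$ on arbitrarily small neighborhoods of $x$, or bypass it entirely by first establishing (ii)$\Leftrightarrow$(iv)$\Leftrightarrow$(i) from the identification $E=C^\infty$ and then deriving (iii) as a corollary of the pushforward identity applied within compatible entropy coordinate charts. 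Reconciling the formal definition of $I_x$, which must accommodate the singular test functions used to realize the directional coordinates of Section~\ref{example:Rn}, with its intended role as an intrinsic invariant preserved by every diffeomorphism, is the delicate technical point that requires the most care.
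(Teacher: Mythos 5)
Your route is genuinely different from the paper's. The paper does not prove the four conditions in a cycle at all: its proof only establishes the equivalence of (i) with a notion of ``entropy diffeomorphism'' phrased through an undefined ``entropy derivative matrix $G_x^{\mathrm{mix}}(F)$'', by quoting Theorem~\ref{thm:existence-uniqueness} to identify $E=C^\infty$, Theorems~\ref{thm:immersion} and~\ref{thm:submersion} to translate full rank of the differential, and the classical inverse function theorem to upgrade a bijective local diffeomorphism to a global one. Conditions (iii) and (iv) are never addressed in the paper's argument. Your plan is more faithful to the statement as written, and your diagnosis of the central difficulty is exactly right: (iii) asserts that the pointwise quadratic response $I_x$ is an invariant of the induced smooth structure, independent of the choice of probes, and the only place the paper could supply this --- Theorem~\ref{thm:info-rigidity} --- proves only the converse direction. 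Flagging that you must either prove this as a separate lemma or reroute around it is the honest and correct move.

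There is, however, a gap in your cycle beyond the one you flag. In (ii)$\Rightarrow$(iv) you claim positive--definiteness of the transported Gram matrix ``from the same pushforward identity applied to each pair $(f_i,f_j)$.'' But the pushforward identity computes $I$ at $F(x)$ with respect to the \emph{pushforward} probes $F_\#\mu^X_{x,\varepsilon}$, whereas condition (iv) requires positive--definiteness of $\bigl(I^Y_{F(x)}(f_i\circ F^{-1},f_j\circ F^{-1})\bigr)$ computed with the probes $\mu^Y$ actually carried by $Y$. At that point in the cycle you have only (ii), not (iii), so you cannot transfer the Gram matrix across the two probe families without invoking the same probe--independence of $I$ that you already identified as unproved. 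Your proposed fallback --- establish (ii)$\Leftrightarrow$(i) directly from $E=C^\infty$ and Lemma~\ref{lem:elementary-smoothness} --- does repair the (i)--(ii) part cleanly, but (iv) still hinges on Gram--matrix invariance and (iii) on probe--independence; note also that Axiom~\ref{axiom:3} only guarantees that \emph{some} chart near each point has positive--definite Gram matrix, so ``$F$ sends classical charts to classical charts'' does not by itself yield (iv). In short: your (iv)$\Rightarrow$(i) and (iii)$\Rightarrow$(ii) legs are fine, your (i)$\Leftrightarrow$(ii) fallback is fine, but both (i)$\Rightarrow$(iii) and (ii)$\Rightarrow$(iv) rest on a single unproved invariance lemma that neither your proposal nor the paper actually supplies, and which (as your closing remark about the singular test functions of Section~\ref{example:Rn} suggests) is in tension with the naive computation $I_x(f,g)=f(x)g(x)$ for bounded continuous test functions.
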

\begin{proof}
Let $F:X\to Y$ be an entropy--smooth map between entropy--smooth manifolds of the 
same dimension $n$.  We need to show that $F$ is a classical $C^\infty$ 
diffeomorphism if and only if it is bijective and the entropy derivative is 
everywhere an isomorphism.

\emph{A classical diffeomorphism is an entropy diffeomorphism:} assume $F$ is a classical $C^\infty$ diffeomorphism.  
Then $F$ and $F^{-1}$ are smooth maps.  
By Theorem~\ref{thm:existence-uniqueness}(a), every classical smooth map is entropy--smooth, so both $F$ and $F^{-1}$ 
are entropy--smooth.  
Furthermore, the classical derivative $dF_x:T_xX\to T_{F(x)}Y$ is an isomorphism 
for every $x$.  
By Theorem~\ref{thm:immersion} and Theorem~\ref{thm:submersion}, the entropy derivative matrix $G_x^{\mathrm{mix}}(F)$ 
is invertible for all $x$; in particular, $F$ is both entropy--immersive and 
entropy--submersive.  
Thus $F$ is an entropy diffeomorphism.

\emph{An entropy diffeomorphism is a classical diffeomorphism:} suppose that $F:X\to Y$ is bijective and entropy--smooth, and that for each 
$x\in X$ the entropy derivative matrix has full rank $n$:
\[
\mathrm{rank}(G_x^{\mathrm{mix}}(F))=n.
\]
By Theorem~\ref{thm:immersion}, $F$ is a classical immersion at each $x$, and by 
Theorem~\ref{thm:submersion} it is also a classical submersion at each $x$.  
Hence the classical derivative $dF_x$ is a linear isomorphism for every $x$:
\[
dF_x:T_xX \longrightarrow T_{F(x)}Y.
\]

By the classical inverse function theorem \cite{lee2013smooth}, $F$ is then a local $C^\infty$ 
diffeomorphism.  
Since $F$ is assumed bijective, the local inverses glue to a global inverse 
$F^{-1}:Y\to X$.  
The map $F^{-1}$ is continuous because $F$ is a continuous bijection between 
Hausdorff second countable manifolds.  
Moreover, $F^{-1}$ is entropy--smooth by assumption, and hence it is classically 
smooth by Theorem~\ref{thm:existence-uniqueness}(a).  
Thus $F$ is a global $C^\infty$ diffeomorphism.

An entropy--smooth bijection $F:X\to Y$ with full--rank entropy derivative 
everywhere is a classical diffeomorphism, and conversely every classical 
diffeomorphism has these properties.  
This proves the theorem.
\end{proof}

\section{Conclusion}

We have introduced an information--theoretic framework for smooth structures on topological manifolds, 
based on the small--scale quadratic response of entropy associated with local probability probes. 
Under a precise set of axioms governing locality, continuity, non--degeneracy, and functional closure, 
this entropy response identifies admissible coordinate functions, yields entropy coordinate charts, 
and determines a smooth atlas. We proved that the resulting entropy--smooth structure is equivalent to 
the classical smooth structure: entropy--smooth functions coincide with smooth functions, entropy 
coordinate charts are smooth diffeomorphisms onto their images, and the induced category is equivalent 
to the category of smooth manifolds.

Conceptually, this work shows that smooth structure can be characterized without reference to derivatives, 
tangent vectors, or coordinate calculus, but instead through the behaviour of information under 
infinitesimal perturbations. The axioms make explicit the minimal information--theoretic content required 
to recover smoothness, clarifying which aspects of differentiability are intrinsic and which must be assumed.

The entropy--smooth framework provides a natural bridge between abstract approaches to smooth structure, 
such as differential spaces and smooth function algebras, and probabilistic or information--theoretic 
models arising from diffusion and entropy. Future work will extend this framework to entropy--based 
differential operators and to the information--theoretic reconstruction of Riemannian geometry and curvature.

\section*{Acknowledgements}
The author received no funding for this work, and declares no competing interests.

\bibliographystyle{unsrt}
\bibliography{refs.bib}

\end{document}